\newtheorem{theorem}{Theorem}[section]
\newtheorem{lemma}[theorem]{Lemma}
\newtheorem{cor}[theorem]{Corollary}
\newtheorem{conj}[theorem]{Conjecture}
\theoremstyle{definition}
\newtheorem{example}[theorem]{Example}
\theoremstyle{remark}
\newtheorem{remark}[theorem]{Remark}
\numberwithin{equation}{section}
\newcommand\leg[2]{\genfrac{(}{)}{}{}{#1}{#2}} 
\newcommand\pdx{\frac{\partial}{\partial x} }
\newcommand\rvl[2]{\left.{#1}\right|_{#2}}  
\newcommand\dtp{\left(\frac{d}{dt}\right)}  
\newcommand\qdqp{\left(q\frac{d}{dq}\right)}  
\begin{document}
\newcommand\mylabel[1]{\label{#1}}
\newcommand{\beqs}{\begin{equation*}}
\newcommand{\eeqs}{\end{equation*}}
\newcommand{\beq}{\begin{equation}}
\newcommand{\eeq}{\end{equation}}
\newcommand\eqn[1]{(\ref{eq:#1})}
\newcommand\thm[1]{\ref{thm:#1}}
\newcommand\lem[1]{\ref{lem:#1}}
\newcommand\propo[1]{\ref{propo:#1}}
\newcommand\corol[1]{\ref{cor:#1}}
\newcommand\sect[1]{\ref{sec:#1}}

\title[$r$-Fishburn numbers]
{Congruences and relations for $r$-Fishburn numbers}

\author{F. G. Garvan}
\address{Department of Mathematics, University of Florida, Gainesville,
FL 32611-8105}
\email{fgarvan@ufl.edu}
\thanks{}

\subjclass[2010]{05A19, 11B65, 11P83}

\date{\today}                   


\keywords{Fishburn numbers, interval orders, congruence, Bernoulli polynomial, Stirling numbers, Glaisher T-numbers}

\begin{abstract}
Recently Andrews and Sellers proved some amazing congruences
for the Fishburn numbers. We extend their results to a more general
sequence of numbers. As a result we prove a new congruence mod $23$
for the Fishburn numbers and prove their conjectured mod $5$ congruence
for a related sequence. We also extend and prove some unpublished conjectures 
of Garthwaite and Rhoades.
\end{abstract}

\maketitle

\section{Introduction}
\mylabel{sec:intro}

The Fishburn numbers $\xi(n)$ \cite{A022493} are defined by
the formal power series
\beq
\sum_{n=0}^\infty \xi(n) q^n = F(1 - q),
\eeq
where  
\beq
F(q) := \sum_{n=0}^\infty (q;q)_n,
\mylabel{eq:Fdef}
\eeq
and  
$$
(a ;q)_n = (1 - a) (1 -aq) \cdots (1 - aq^{n-1}).
$$
Zagier \cite{Za} showed that $\xi(n)$ is the
number of linearized chord diagrams of degree $n$, and  also 
the number of nonisomorphic interval orders on $n$ unlabeled points.
Andrews and Sellers \cite{An-Se} proved some amazing
congruences for the Fishburn numbers. For example, for
all $n\ge0$,
\begin{align}
  \xi(5n+3) &\equiv \xi(5n+4)\equiv 0 \pmod{5},  \mylabel{eq:mod5congs} \\
  \xi(7n+6) &\equiv 0 \pmod{7},  \mylabel{eq:mod7congs} \\
  \xi(11n+8) &\equiv \xi(11n+9)\equiv \xi(11n+10)\equiv 0 \pmod{11}, 
  \mylabel{eq:mod11congs}\\
  \xi(17n+16) &\equiv 0 \pmod{17}, \text{\ \ and} \mylabel{eq:mod17congs} \\
  \xi(19n+17) &\equiv \xi(19n+18)\equiv 0 \pmod{19}. \mylabel{eq:mod19congs}
\end{align}

In fact, they prove that there are analogous congruences for
all primes $p$ that are quadratic nonresidues mod $23$. For $p$
prime they define 
\begin{equation}
\mylabel{eq:Sdef}    
S(p) = \left\{ j \,:\,  0\leq j\leq p-1 
       \text{\ such that\ } \tfrac{1}{2}n(3n-1) \equiv j \pmod{p} 
       \text{\ for some } n   \right\}
\end{equation}
and 
\begin{equation}
\mylabel{eq:Tdef}
T(p) = \left\{ k \,:\, 0\leq k\leq p-1 
       \text{\ such that\ } 
       k \text{\ is larger than every element of $S(p)$}    \right\}.
\end{equation}
We state their main result.
\begin{theorem}[Andrews and Sellers\cite{An-Se}]
\mylabel{thm:ASmainthm}
If $p$ is a prime and $i\in T(p)$ (as defined in \eqn{Tdef}), 
then for all $n\geq 0,$ 
$$\xi(pn+i) \equiv 0 \pmod{p}.$$
\end{theorem}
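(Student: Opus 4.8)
The plan is to convert $\sum_{n\ge0}\xi(n)q^n=F(1-q)$ into an identity that exposes the residue classes in \eqn{Sdef}, reduce it modulo $p$, and then read off that the $p$-dissection components indexed by $T(p)$ vanish.

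First, since $F(q)=\sum_n(q;q)_n$ is only a formal/asymptotic series, I would work with the polynomial truncations $F_N(q):=\sum_{n=0}^{N}(q;q)_n$ and invoke Zagier's analysis of the Kontsevich--Zagier function, which provides an honest finite form of the ``strange identity'': a closed expression for $F_N$ whose terms involve the generalized pentagonal exponents $\tfrac12 m(3m-1)$. Substituting $q\mapsto 1-q$ and letting $N\to\infty$---legitimate because $(1-q;1-q)_n=\prod_{j=1}^{n}\bigl(1-(1-q)^j\bigr)$ has $q$-order exactly $n$, so the sum converges $q$-adically---yields an identity for $\sum_n\xi(n)q^n$ in $\mathbb Z[[q]]$ in which, after the substitution, pentagonal numbers occur as exponents of $1-q$. (A modular / quantum-modular approach along the lines of Garthwaite and Rhoades is also possible, but the route above is self-contained.)

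Next I would reduce modulo $p$ using $(1-q)^p=1-q^p$ and $p\mid\binom pk$ $(0<k<p)$ in $\mathbb F_p[[q]]$. The effect is that a factor $(1-q)^g$ becomes $(1-q^p)^{\lfloor g/p\rfloor}(1-q)^{g\bmod p}$, i.e.\ a series in $q^p$ times a polynomial of degree $g\bmod p$, which therefore feeds only residue classes $\le g\bmod p$ into the $p$-dissection. Combined with the fact that pentagonal exponents satisfy $g\bmod p\in S(p)$, a bookkeeping of the terms of the first-step identity then shows that, writing $\sum_n\xi(n)q^n\equiv\sum_{i=0}^{p-1}q^i\,\Phi_i(q^p)\pmod p$, one has $\Phi_i\equiv 0$ for every residue $i$ that exceeds all elements of $S(p)$.

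Finally, by \eqn{Sdef}--\eqn{Tdef} those are exactly the residues making up $T(p)$, so $\Phi_i\equiv 0$ for $i\in T(p)$, which is precisely $\xi(pn+i)\equiv 0\pmod p$ for all $n\ge 0$. (When $\max S(p)=p-1$ the set $T(p)$ is empty and the assertion is vacuous; by quadratic reciprocity applied to $-23$, this is exactly when $p$ is a quadratic residue mod $23$.) The main obstacle lies in making the first-step identity precise enough: one must verify that, modulo $p$, no term contributes to a residue above $\max S(p)$---the pentagonal exponents handle themselves, but the auxiliary factors needed for $q$-adic convergence must be controlled so that nothing spills into the window $T(p)$. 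It is precisely this bookkeeping that, for the $r$-fold generalization treated later in the paper, gets reorganized via Bernoulli polynomials, Stirling numbers, and Glaisher $T$-numbers.
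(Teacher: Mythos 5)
Your overall skeleton (truncate $F$, substitute $q\mapsto 1-q$, reduce mod $p$ via $(1-q)^p\equiv 1-q^p$, and argue that residue classes above $\max S(p)$ receive no contribution) is indeed the shape of the Andrews--Sellers argument and of this paper's Section \sect{proofmainthm}. But there is a genuine gap at the decisive step: you defer everything to an ``honest finite form of the strange identity'' attributed to Zagier, giving ``a closed expression for $F_N$ whose terms involve the generalized pentagonal exponents,'' and you never state it or verify that it has the required structure. Zagier's paper does not supply such a formal-power-series identity: his strange identity $F(q)\,$``$=$''$\,-\tfrac12\sum_{n\ge1}n\chi(n)q^{(n^2-1)/24}$ holds only in an asymptotic/derivative-matching sense at roots of unity, so it cannot simply be substituted into $q\mapsto 1-q$ and reduced in $\mathbb{F}_p[[q]]$. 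The finite identities that do exist (e.g.\ the Gaussian-binomial form of the pentagonal number theorem for $(q;q)_N$) carry exactly the ``auxiliary factors'' you mention, and showing that after the substitution and the $p$-dissection they contribute nothing to the window $T(p)$ up to the needed order is the entire content of Lemma \lem{ASlemma}, i.e.\ the vanishing $\alpha(p,n,i,k)=0$ for $i\notin S(p)$ and $0\le k\le n-1$. Your closing paragraph concedes that this bookkeeping is ``the main obstacle'' and does not carry it out, so what you have is a strategy outline in which the theorem's substance is assumed rather than proved.

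For comparison: the present paper does not reprove this statement either; it cites Lemma \lem{ASlemma} from Andrews and Sellers and combines it with the new Theorem \thm{newxithm} to get the stronger Theorem \thm{mainthm}, whose $(r,s)=(1,0)$ case contains the statement (and more, e.g.\ \eqn{newmod23cong}). The mechanism there is to work with the truncations $F(q,pn-1)$, dissect as in \eqn{pdissF}, and use the coefficient vanishing of $A_p(pn-1,i,1-q)$ for $i\notin S(p)$ before letting $n\to\infty$ --- precisely the lemma your proposal would need to establish. (Your side remarks are fine: a single factor $(1-q)^g$ times a series in $q^p$ only feeds residues at most $g\bmod p$, and $T(p)=\emptyset$ exactly when $\leg{-23}{p}=1$, equivalently $\leg{p}{23}=1$.)
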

\begin{remark}
Congruences \eqn{mod5congs}--\eqn{mod19congs} are the cases $p=5$, $7$,
$11$, $17$ and $19$ of Theorem \thm{ASmainthm}.  Andrews and Sellers
proved that $T(p)$ is nonempty whenever $p$ is a quadratic nonresidue 
mod $23$.
\end{remark}

Recently Garthwaite and Rhoades \cite{Ga-Rh} observed congruence pairs and 
triples 
such as
\begin{align}
\xi(5n+2) - 2\,\xi(5n+1) &\equiv0 \pmod{5}, \mylabel{eq:xcong5}\\
\xi(11n+7) - 3\,\xi(11n+4) + 2\,\xi(11n+3) &\equiv 0 \pmod{11}.
\mylabel{eq:xcong11}
\end{align}
In Theorem \thm{mainthm} and Corollary \corol{congrels} below we prove 
congruences relations mod $p$ exist for all primes $\ge5$.

We extend the Andrews and Sellers result to what we call $r$-Fishburn
numbers $\xi_r(n)$ and which we define by
the formal power series
\beq
\sum_{n=0}^\infty \xi_r(n) q^n = F((1 - q)^r),
\eeq
where $r$ is any nonzero integer. The case $r=1$ corresponds to
the ordinary Fishburn numbers. Even in the case $r=1$ we are able
to augment the set $T(p)$.  
For $p \ge 5$ prime, $r$  relatively
prime
to $p$ and nonzero and $0\le s \le p-1$ we define
\begin{align}
\mylabel{eq:Srdef}    
S^*(p,r,s) &= \left\{ j \,:\,  0\leq j\leq p-1 
       \text{\ such that\ } \tfrac{1}{2}r n(3n-1) \equiv j-s \pmod{p} 
       \text{\ for some }  n \right.\\
       &\qquad \left. \text{and\ }24(j-s)\not\equiv -r\pmod{p}\right\}
\nonumber
\end{align}
and 
\begin{equation}
\mylabel{eq:Trdef}
T^*(p,r,s) = \left\{ k \,:\, 0\leq k\leq p-1 
       \text{\ such that\ } 
       k \text{\ is larger than every element of $S^*(p,r,s)$}    \right\}.
\end{equation}

We state our main
\begin{theorem}
\mylabel{thm:mainthm}
Suppose $p\ge5$ is prime, $r$ is a nonzero integer relatively
prime to $p$ and $0 \le s \le p-1$. If $m\in T^*(p,r,s)$ (as defined in \eqn{Trdef}), 
then for all $n\geq 0,$ 
$$
\sum_{j=0}^s \binom{s}{j} (-1)^j \xi_r(pn+m -j) \equiv 0 \pmod{p}.
$$
\end{theorem}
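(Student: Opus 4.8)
\medskip
\noindent\textbf{Proof proposal.} The plan is to recast the claimed congruence as a single divisibility statement for a power series, and then run the Andrews--Sellers argument driven by Zagier's strange identity. First, observe that
$$
\sum_{j=0}^{s}\binom{s}{j}(-1)^{j}\,\xi_r(pn+m-j)\;=\;[q^{pn+m}]\,\Bigl((1-q)^{s}\,F\bigl((1-q)^{r}\bigr)\Bigr),
$$
so it is enough to prove that this coefficient is divisible by $p$ whenever $m\in T^{*}(p,r,s)$. Because $\prod_{j=1}^{N}\bigl(1-(1-q)^{rj}\bigr)$ has $q$-adic valuation tending to $\infty$ with $N$ for every nonzero $r$, the series $F\bigl((1-q)^{r}\bigr)=\sum_{k\ge 0}\prod_{j=1}^{k}\bigl(1-(1-q)^{rj}\bigr)$ is a genuine element of $\mathbb{Z}[[q]]$, and only finitely many $k$ affect the coefficient in question.

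Second, I would invoke Zagier's strange identity for $F$ in the precise (truncated, $p$-adic) form underlying the Andrews--Sellers proof: modulo $p$ and modulo $q^{pn+m+1}$ one should obtain a congruence of the shape
$$
F\bigl((1-q)^{r}\bigr)\;\equiv\;\sum_{k}\varepsilon_{k}\,(1-q)^{\,r\,k(3k-1)/2}\;+\;(\text{error})\pmod p,
$$
the sum over a finite set of integers $k$, the exponents $g_{k}=\tfrac12 k(3k-1)=\tfrac1{24}\bigl((6k-1)^{2}-1\bigr)$ being the generalized pentagonal numbers of Euler's theorem, the coefficient $\varepsilon_{k}$ (essentially $(-1)^{k}(6k-1)$) vanishing modulo $p$ exactly when $6k\equiv 1\pmod p$, and the error term contributing nothing modulo $p$ to the coefficients in the progression $pn+m$. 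Proving this reduced form rigorously is the step I expect to be the main obstacle: for $r=1$ it is already the technical core of Andrews--Sellers, and here one must additionally justify the substitution $q\mapsto(1-q)^{r}$, handle the honest infinite power series $(1-q)^{rj}$ that occur when $r<0$, and control the error term uniformly in the truncation.

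Third, granting the reduced form, multiply by $(1-q)^{s}$ and collapse the exponents via Frobenius. Writing $s+rg_{k}=a_{k}p+b_{k}$ with $0\le b_{k}\le p-1$, we have $(1-q)^{s+rg_{k}}\equiv(1-q^{p})^{a_{k}}(1-q)^{b_{k}}\pmod p$, so $[q^{pn+m}]\,(1-q)^{s+rg_{k}}\equiv(-1)^{n+m}\binom{a_{k}}{n}\binom{b_{k}}{m}\pmod p$, which is $0$ unless $b_{k}\ge m$. Hence
$$
[q^{pn+m}]\,\Bigl((1-q)^{s}F\bigl((1-q)^{r}\bigr)\Bigr)\;\equiv\;(-1)^{n+m}\sum_{k}\varepsilon_{k}\binom{a_{k}}{n}\binom{b_{k}}{m}\pmod p.
$$

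Finally, match this with the definitions of $S^{*}$ and $T^{*}$. From $24(b_{k}-s)\equiv 24rg_{k}=r\bigl((6k-1)^{2}-1\bigr)\pmod p$ we see that $24(b_{k}-s)\equiv -r\pmod p$ holds precisely when $(6k-1)^{2}\equiv 0$, i.e. when $6k\equiv 1\pmod p$, i.e. exactly when $\varepsilon_{k}\equiv 0\pmod p$; consequently the residues $b_{k}$ attached to the $k$ with $\varepsilon_{k}\not\equiv 0\pmod p$ are precisely the elements of $S^{*}(p,r,s)$. If $m\in T^{*}(p,r,s)$ then $m$ exceeds every element of $S^{*}(p,r,s)$, so $m>b_{k}$ and $\binom{b_{k}}{m}=0$ for every $k$ with $\varepsilon_{k}\not\equiv 0$, while each $k$ with $\varepsilon_{k}\equiv 0\pmod p$ contributes $0$; the displayed sum is therefore $\equiv 0\pmod p$, as claimed. (As a sanity check, for $r=1$, $s=0$ the same computation gives $S^{*}(p,1,0)=S(p)\setminus\{g_{k_{0}}\}$ with $6k_{0}\equiv 1\pmod p$, which is exactly why $T^{*}(p,1,0)$ can strictly enlarge the Andrews--Sellers set $T(p)$.)
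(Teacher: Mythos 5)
Your reduction of the claim to the statement that $p$ divides $[q^{pn+m}]\bigl((1-q)^{s}F((1-q)^{r})\bigr)$, and your endgame (collapse $(1-q)^{s+rg_k}\equiv(1-q^{p})^{a_k}(1-q)^{b_k}\pmod p$ by Frobenius, note that $\binom{b_k}{m}=0$ once $m$ exceeds every element of $S^{*}(p,r,s)$, and check that the excluded residue $24(j-s)\equiv-r$ corresponds exactly to $(6k-1)^{2}\equiv0\pmod p$) are sound and closely parallel the paper's Section \sect{proofmainthm}. The problem is the step you "grant": the truncated mod-$p$ pentagonal form $F((1-q)^{r})\equiv\sum_{k}\varepsilon_{k}(1-q)^{rg_{k}}+(\text{error})\pmod p$ with $\varepsilon_{k}$ essentially $(-1)^{k}(6k-1)$. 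No such statement is "the precise form underlying the Andrews--Sellers proof." Their technical core, Lemma \lem{ASlemma}, is a vanishing statement about the dissection coefficients $\alpha(p,n,i,k)$ for $i\notin S(p)$; it says nothing about explicit coefficients $\varepsilon_{k}$, and, crucially, nothing at all about the residue class $i_{0}$ with $24i_{0}\equiv-1\pmod p$. Zagier's strange identity is an identity of asymptotic expansions at roots of unity, not a coefficientwise power-series identity, so it cannot simply be reduced mod $p$ and truncated; turning it into usable information about the $q$-expansion is precisely the hard work.

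Moreover, the piece of your assumed identity that your $S^{*}$-matching actually needs — that the contribution of the class with $(6k-1)^{2}\equiv0\pmod p$ (your "$\varepsilon_{k}\equiv0\pmod p$") is divisible by $p$ — is exactly the new content of this paper, Theorem \thm{newxithm}, which shows $\alpha(p,n,i_{0},k)=p\,\leg{12}{p}\,\bar\xi_{p}(k)$; its proof occupies all of Section \sect{proofnewxithm} and requires Zagier's Bernoulli-polynomial expansions \eqn{ZAG2}--\eqn{ZAG3}, the $C(n,i,j,p)$ recursion, the inversion Theorem \thm{A1id} via generalized Stirling numbers, and the Glaisher $T$-number generating function. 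Note also that this contribution is not zero but a full power series $p\,\leg{12}{p}(1-q)^{\lfloor p/24\rfloor}F((1-q)^{p})$, which is why your constant-coefficient pentagonal model needs justification rather than analogy. Finally, your unspecified "error term contributing nothing modulo $p$ to the coefficients in the progression $pn+m$" is dangerously close to assuming the theorem: that non-contribution is the thing to be proved, and in the paper it is made precise as an $O(q^{pn})$ error obtained by applying Lemma \lem{ASlemma} and Theorem \thm{newxithm} to $A_{p}(pn-1,i,(1-q)^{rp})$, with a separate argument (Case II, writing $r=mp+\beta$ and extracting the unit $\Phi(q^{p})$) to handle $r<0$, which you flag but do not carry out. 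As it stands, the proposal assumes, rather than proves, both the Andrews--Sellers input and the paper's main new lemma, so it is not a proof.
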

\begin{remark}
We make some remarks.
\begin{enumerate}
\item[(i)]
The $(r,s)=(1,0)$ case of the theorem is slightly stronger than the
Andrews-Sellers result. We observe that although $22\in S(23)$,
$22\not\in S^*(23,1,0)$ and we find $T^*(23,1,0)=\{18,19,20,21,22\}$ so that
\beq
\xi(23n+18) \equiv \xi(23n+19)\equiv \xi(23n+20) \equiv
\xi(23n+21) \equiv \xi(23n+22)\equiv 0 \pmod{23}. 
\mylabel{eq:newmod23cong}
\eeq
This is a congruence that Andrews and Sellers missed.
\item[(ii)]
We find that $S^*(5,-1,0)=\{0,3\}$ and $T^*(5,-1,0)=\{4\}$ so that
\beq
\xi_{-1}(5n+4) \equiv 0 \pmod{5}.
\mylabel{eq:a-s-conj}
\eeq
This congruence was conjectured by Andrews and Sellers \cite{An-Se}.
When $(r,s)=(-1,0)$ the Theorem only gives a congruence in the case $p=5$.
This is because $1$ is a pentagonal number so that when $p>5$ we have $p-1\in S^*(p,-1)$
and $T^*(p,-1,0)$ is empty. Using the facts that
$$
T^*(5,-1,2)=\{3,4\},\quad T^*(5,-1,3)=\{4\},
$$                         
we find that
\beq
\xi_{-1}(5n+3) \equiv 3\,\xi_{-1}(5n+2) \equiv 2\,\xi_{-1}(5n+1) \pmod{5}.
\mylabel{eq:xaconj5}
\eeq
\end{enumerate}
\end{remark}
\begin{example}
$$
S^*(43,-1,2) = \{0, 1, 2, 5, 7, 10, 13, 14, 16, 18, 19, 23, 29, 30, 31, 33, 37, 38, 39, 40, 41\},
$$
so that
$$
T^*(43,-1,2) = \{42\},
$$
and
$$
\xi_{-1}(43n+42) - 2\,\xi_{-1}(43n+41) + \xi_{-1}(43n+40) \equiv 0 \pmod{43},
$$
for all $n\ge 0$.
\end{example}

We highlight the $s=0$ case of the theorem.

\begin{cor}
\mylabel{cor:maincong}
Suppose $p\ge5$ is prime and $r$ is a nonzero integer relatively
prime to $p$. If $m\in T^*(p,r,0)$ (as defined in \eqn{Trdef}), 
then for all $n\geq 0,$ 
$$
\xi_r(pn+m) \equiv 0 \pmod{p}.
$$
\end{cor}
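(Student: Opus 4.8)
\emph{Proof proposal.}
This is the $s=0$ instance of Theorem~\thm{mainthm}: when $s=0$ the sum $\sum_{j=0}^{s}\binom{s}{j}(-1)^{j}\xi_{r}(pn+m-j)$ reduces to its single term $\xi_{r}(pn+m)$ and $T^{*}(p,r,s)$ reduces to $T^{*}(p,r,0)$, so the shortest proof is a one-line appeal to the theorem. What follows is how I would prove the theorem, and hence this corollary, directly; the corollary is the transparent case and already carries every idea, the general $s$ being recovered at the end simply by replacing $\sum_{n\ge0}\xi_{r}(n)q^{n}$ with $(1-q)^{s}\sum_{n\ge0}\xi_{r}(n)q^{n}$, whose coefficient of $q^{pn+m}$ is exactly $\sum_{j=0}^{s}\binom{s}{j}(-1)^{j}\xi_{r}(pn+m-j)$.

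Set $\Phi(q)=\sum_{n\ge0}\xi_{r}(n)q^{n}=F((1-q)^{r})=\sum_{N\ge0}\prod_{i=1}^{N}\bigl(1-(1-q)^{ri}\bigr)\in\mathbb{Z}[[q]]$, the membership holding because each finite product is divisible by $q^{N}$. Following Andrews and Sellers \cite{An-Se}, the main step is to push Zagier's strange identity $F(q)\doteq-\tfrac12\sum_{n\ge1}\chi_{12}(n)\,n\,q^{(n^{2}-1)/24}$ (here $\chi_{12}$ is the odd character modulo $12$) through the substitution $q\mapsto(1-q)^{r}$ in a \emph{finite} form: for every $M\ge1$ one produces a finite expression $P_{M}(q)=\sum_{k}\sigma_{k}\,\ell_{k}\,(1-q)^{E_{k}}$, where $(E_{k},\ell_{k})$ is $\bigl(\tfrac12 rk(3k+1),\,6k+1\bigr)$ or $\bigl(\tfrac12 rk(3k-1),\,6k-1\bigr)$ — so the $E_{k}$ run over the "$r$-pentagonal'' numbers of \eqn{Srdef}, $\ell_{k}$ is the corresponding linear factor from Zagier's "$n$'', and $\sigma_{k}\in\mathbb{F}_{p}^{\times}$ — such that
\[
\Phi(q)\equiv P_{M}(q)\pmod{p}\quad\text{modulo }q^{M}.
\]
This truncation is legitimate because the omitted tail $\prod_{i\ge M'}\bigl(1-(1-q)^{ri}\bigr)$ is divisible by an arbitrarily high power of $q$; the decisive feature is that $P_{M}$ is a bare sum of powers of $1-q$, with no stray prefactor to blur residue classes.

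Now fix $n$, take $M>pn+m$, and extract the coefficient of $q^{pn+m}$ from the congruence using Lucas' theorem. Since $\bigl[q^{pn+m}\bigr](1-q)^{E}=(-1)^{pn+m}\binom{E}{pn+m}\equiv0\pmod{p}$ unless $m\le(E\bmod p)$, only the $k$ with $E_{k}\bmod p\ \ge m$ can contribute. Moreover, if $E_{k}$ reduces to a residue $j$ with $24j\equiv-r\pmod{p}$, then $12k(3k\pm1)\equiv-1$, whence $\ell_{k}^{2}=(6k\pm1)^{2}=12k(3k\pm1)+1\equiv0$, i.e. $\ell_{k}\equiv0\pmod{p}$ and that term drops. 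Hence every surviving contribution has exponent-residue $j$ that is an $r$-pentagonal residue with $24j\not\equiv-r\pmod{p}$, that is, $j\in S^{*}(p,r,0)$. Consequently, if $m$ exceeds every element of $S^{*}(p,r,0)$ — which is precisely the condition $m\in T^{*}(p,r,0)$ of \eqn{Trdef} — no $k$ contributes and $\xi_{r}(pn+m)\equiv0\pmod{p}$ for all $n\ge0$. The exclusion "$24j\not\equiv-r$'' is exactly what allows $T^{*}(p,r,0)$ to be strictly larger than the naive set of residues lying above all $r$-pentagonal residues; for $r=1$ this is responsible for the new congruence modulo $23$.

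The genuine obstacle is the first step. Zagier's identity is not an identity of formal power series — its right-hand side diverges $q$-adically and is meaningful only through radial limits at roots of unity — so one must work with an honest finite version carrying a controlled remainder, then verify that after $q\mapsto(1-q)^{r}$ that remainder does not affect the coefficients in question and that the main term really does collapse, modulo $p$, to the partial-theta shape above. Two technical points must be watched: the case $r<0$, where the $E_{k}$ are nonpositive and one needs the Lucas estimate in the form $\bigl[q^{j}\bigr](1-q)^{-a}=\binom{a+j-1}{a-1}$ — equivalently, a clean device for isolating residue classes modulo $p$, given that $1-q$ is nilpotent rather than invertible in $\mathbb{F}_{p}[q]/(q^{p}-1)$; and the few boundary terms thrown off when the partial theta is truncated, which must be shown not to enlarge the sieving set. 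Everything downstream — the Lucas bookkeeping and the comparison against $S^{*}(p,r,0)$ and $T^{*}(p,r,0)$ — is routine.
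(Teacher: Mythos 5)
Your opening sentence is exactly the paper's proof: Corollary \corol{maincong} is simply the highlighted $s=0$ case of Theorem \thm{mainthm}, so the one-line appeal is correct and identical in approach, and nothing more is required. The long direct sketch you append is a genuinely different route from how the paper proves the theorem, and you should not mistake it for a complete alternative: its crucial first step --- that $F((1-q)^r)$ agrees modulo $p$ (to any order $q^M$) with a bare partial-theta sum $\sum_k \sigma_k\,\ell_k\,(1-q)^{E_k}$ with constant unit coefficients --- is precisely the hard content, and you acknowledge leaving it unproved. The paper's rigorous substitute is the $p$-dissection \eqn{pdissF} into the polynomials $A_p(pn-1,i,q^p)$, combined with Lemma \lem{ASlemma} (classes $i\notin S(p)$ contribute only $O(q^{pn})$) and Theorem \thm{newxithm} (the class $24i_0\equiv-1$ contributes coefficients $p\,\chi(p)\,\bar\xi_p(k)$, hence dies mod $p$); in that formulation each surviving residue class carries a power series in $q^p$ rather than a constant, which is weaker than your $P_M$ claim but suffices for the residue-class sieve. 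Your mechanism for discarding $24j\equiv-r$, namely $(6k\pm1)^2=12k(3k\pm1)+1\equiv0$ so $\ell_k\equiv0\pmod{p}$, is the partial-theta shadow of the explicit factor $p$ in Theorem \thm{newxithm}, so the idea is consistent with the paper; but as a standalone proof of the theorem your sketch has a gap, while as a proof of the corollary your first sentence already suffices.
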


\begin{cor}
\mylabel{cor:congrels}
Suppose $p\ge5$ is prime, and $r$ is a nonzero integer relatively
prime to $p$. Then there are at least $\frac{1}{2}(p+1)$ linearly independent
congruence relations mod $p$ of the form
$$
\sum_{j=0}^{p-1} \alpha_j\, \xi_r(pn + j) \equiv 0 \pmod{p},
$$
where $n$ is any nonnnegative integer and $\vec{\alpha}\in\mathbb{F}_p^{p}$.
\end{cor}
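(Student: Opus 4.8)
The plan is to extract, for the single top index $m=p-1$, one congruence relation from Theorem~\thm{mainthm} for each admissible shift $s$, and then to observe that these relations are automatically linearly independent because they occupy nested blocks of indices with distinct lower endpoints. The first task is to determine for which $s\in\{0,1,\dots,p-1\}$ we have $p-1\in T^*(p,r,s)$; since $S^*(p,r,s)\subseteq\{0,\dots,p-1\}$, this holds exactly when $p-1\notin S^*(p,r,s)$. Completing the square gives $24\cdot\tfrac12 r n(3n-1)=r\bigl((6n-1)^2-1\bigr)$, and since $p\ge5$ the map $n\mapsto 6n-1$ permutes $\mathbb F_p$; hence $\tfrac12 r n(3n-1)\equiv (p-1)-s\pmod p$ has a solution precisely when $24\bigl((p-1)-s\bigr)+r$ is $r$ times a square modulo $p$, and the side condition $24(j-s)\not\equiv -r$ in \eqn{Srdef} removes exactly the $s$ for which that square is $0$ (a genuine case, occurring at $6n\equiv1$). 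Thus $p-1\in S^*(p,r,s)$ iff $24\bigl((p-1)-s\bigr)+r$ is $r$ times a \emph{nonzero} square, and as $s$ ranges over $\{0,1,\dots,p-1\}$ the residue $24\bigl((p-1)-s\bigr)+r$ ranges over all of $\mathbb F_p$, so this occurs for exactly $\tfrac12(p-1)$ values of $s$. Hence the set $\mathcal S:=\{\,s\in\{0,1,\dots,p-1\}:p-1\in T^*(p,r,s)\,\}$ has $|\mathcal S|=\tfrac12(p+1)$.

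For each $s\in\mathcal S$ I would then apply Theorem~\thm{mainthm} with $m=p-1$, obtaining the valid congruence
\[
\sum_{j=0}^{s}\binom{s}{j}(-1)^j\,\xi_r\bigl(pn+(p-1)-j\bigr)\equiv 0\pmod p\qquad(n\ge0),
\]
which has exactly the prescribed shape $\sum_{j=0}^{p-1}\alpha^{(s)}_j\,\xi_r(pn+j)\equiv0\pmod p$ with $\alpha^{(s)}\in\mathbb F_p^{\,p}$ supported on the block $\{p-1-s,\dots,p-1\}\subseteq\{0,\dots,p-1\}$ (the inclusion uses $0\le s\le p-1$). The smallest index occurring in $\alpha^{(s)}$ is $p-1-s$, with coefficient $\binom{s}{s}(-1)^s=(-1)^s\not\equiv0\pmod p$. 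Since the integers $p-1-s$ for $s\in\mathcal S$ are pairwise distinct, a triangular elimination argument — inspect a hypothetical dependence $\sum_{s\in\mathcal S}c_s\alpha^{(s)}=0$ at the coordinate $p-1-\max\mathcal S$, conclude that coefficient is $0$, and iterate downward through $\mathcal S$ — shows $\{\alpha^{(s)}:s\in\mathcal S\}$ is linearly independent over $\mathbb F_p$. This yields $\tfrac12(p+1)$ linearly independent congruence relations of the required form, proving Corollary~\corol{congrels}.

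The substantive part is the count in the first step; the point to watch is that the side condition in \eqn{Srdef} deletes precisely one value of $s$, dropping the size of $\{s:p-1\in S^*(p,r,s)\}$ from $\tfrac12(p+1)$ (squares including $0$) to $\tfrac12(p-1)$, and it is exactly this that makes $|\mathcal S|=\tfrac12(p+1)$. Using the same top index $m=p-1$ for every $s$ — rather than, say, the least element of each $T^*(p,r,s)$ — is what renders the independence immediate, so I anticipate no real obstacle beyond this bookkeeping.
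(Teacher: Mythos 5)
Your proposal is correct and follows essentially the same route as the paper: fix the top index $m=p-1$, show via completing the square that $p-1\in T^*(p,r,s)$ for exactly $\tfrac12(p+1)$ values of $s$ (your condition that $24((p-1)-s)+r$ not be $r$ times a nonzero square is equivalent to the paper's condition $\leg{-24(1+s)\overline{r}+1}{p}=-1$ or $0$), and then invoke Theorem \thm{mainthm} for each such $s$. The only difference is that you spell out the triangular support argument for linear independence, which the paper simply declares to be clear.
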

\begin{remark}
In Section \sect{proofmainthm} we prove Corollary \corol{congrels}
by showing that the relations
\beqs
\sum_{j=0}^s \binom{s}{j} (-1)^j \xi_r(pn+ p-1-j) \equiv 0 \pmod{p},
\eeqs
where the Legendre symbol $\leg{-24(1+s)\overline{r}+1}{p}=-1$ or $0$,
form a set of $\frac{1}{2}(p+1)$ linearly independent
congruence relations mod $p$. 
Here $r\overline{r}\equiv1\pmod{p}$.
This also means that $s$ can never
equal $p-1$. 
\end{remark}
\begin{example}
When $p=7$ and $r=1$ there are $4$ relations mod $7$:
\begin{align*}
                          \xi(7 n + 6) &\equiv0\pmod{7},\\  
 \xi(7 n + 6) - 2\, \xi(7 n + 5) + \xi(7 n + 4) &\equiv0\pmod{7},\\
 \xi(7 n + 6) - 3\, \xi(7 n + 5) + 3\, \xi(7 n + 4) - \xi(7 n + 3) &\equiv0\pmod{7},\\
 \xi(7 n + 6) - 4\, \xi(7 n + 5) + 6\, \xi(7 n + 4) - 4\, \xi(7 n + 3) 
+ \xi(7 n + 2) &\equiv0\pmod{7},
\end{align*}
which can be rewritten as
\begin{align*}
                          \xi(7 n + 6) &\equiv0\pmod{7},\\  
                  \xi(7 n + 5) + 5\, \xi(7 n + 2) &\equiv0\pmod{7},\\  
                  \xi(7 n + 4) + 3\, \xi(7 n + 2) &\equiv0\pmod{7},\\ 
                   \xi(7 n + 3) + \xi(7 n + 2) &\equiv0\pmod{7}.            
\end{align*}
\end{example}
\begin{conj}
Suppose $p\ge5$ is prime, and $r$ is a nonzero integer relatively
prime to $p$. Then there are exactly $\frac{1}{2}(p+1)$ linearly independent
congruence relations mod $p$ of the form
$$
\sum_{j=0}^{p-1} \alpha_j\, \xi_r(pn + j) \equiv 0 \pmod{p},
$$
where $n$ is any nonnnegative integer and $\vec{\alpha}\in\mathbb{F}_p^{p}$.
\end{conj}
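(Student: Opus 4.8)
Since the statement is a conjecture, the plan is to lay out a realistic line of attack, separating what is already available from the genuine difficulty. The first move is to reformulate. Reducing modulo $p$, put $g(q)=\sum_{n\ge0}\xi_r(n)q^{n}\in\mathbb F_p[[q]]$ and take its $p$-dissection $g(q)=\sum_{j=0}^{p-1}q^{j}g_{j}(q^{p})$. A congruence relation $\sum_{j=0}^{p-1}\alpha_{j}\xi_r(pn+j)\equiv0$ valid for all $n$ is exactly a relation $\sum_{j=0}^{p-1}\alpha_{j}g_{j}(q)=0$ in $\mathbb F_p[[q]]$, so the space of such relations is the kernel of the $\mathbb F_p$-linear map $\vec\alpha\mapsto\sum_{j}\alpha_{j}g_{j}$, of dimension $p-\dim_{\mathbb F_p}\mathrm{span}\{g_{0},\dots,g_{p-1}\}$. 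Thus the Conjecture is equivalent to $\dim_{\mathbb F_p}\mathrm{span}\{g_{0},\dots,g_{p-1}\}=\tfrac12(p-1)$. The inequality $\dim\le\tfrac12(p-1)$ is precisely Corollary \corol{congrels} — the $\tfrac12(p+1)$ relations displayed after it are of the stated form and cut the span to this dimension — so all the new content is the reverse inequality $\dim\ge\tfrac12(p-1)$.

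For the lower bound I would invoke the theta identity underlying the proof of Theorem \thm{mainthm}: modulo $p$ and up to an overall unit, $g(q)$ equals a partial-theta-type series $\sum_{m}\sgn(m)\,q^{a(m)}$ plus a polynomial of degree $<p$, where, after completing the square, $24\,a(m)\equiv r(6m-1)^{2}+c\pmod p$ for a constant $c$; hence $a(m)\equiv i\pmod p$ is solvable exactly when $24\,i\,\overline r+1$ is a square mod $p$, which is the arithmetic origin of \eqn{Srdef}--\eqn{Trdef}. Consequently each $g_{i}$ is an explicit $\mathbb F_p$-combination of one or two ``sub-series'' of the form $\sum_{t}(-1)^{t}q^{At^{2}+Bt+C}$ (one per residue class of $m$ mod $p$ meeting $i$), plus a constant. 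Exactly as in the remark following Corollary \corol{congrels}, the relations already in hand reduce a spanning set of $\mathrm{span}\{g_{0},\dots,g_{p-1}\}$ to the $\tfrac12(p-1)$ series $g_{i}$ for which $24\,i\,\overline r+1$ is a nonzero square mod $p$ (this set includes $g_{0}$, which is nonzero since $\xi_r(0)=1$). The Conjecture thereby reduces to the assertion that these $\tfrac12(p-1)$ sub-series are $\mathbb F_p$-linearly independent — equivalently, that no ``accidental'' relation exists beyond those produced by Theorem \thm{mainthm}.

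To establish that independence I would not argue by comparing leading terms — two different residue classes of $m$ can yield sub-series with the same lowest exponent, so this genuinely fails — but would instead exploit the theta structure of the $\sum_{t}(-1)^{t}q^{At^{2}+Bt+C}$. One route is to lift the $g_{i}$ to the corresponding characteristic-zero (partial) theta series, which are $\mathbb Q$-linearly independent because attached to distinct data, and then to control the reduction modulo $p$ — tracking Gauss sums and the distribution of the squares $24\,i\,\overline r+1$ — so as to rule out collapses for all but finitely many $p$, the remaining primes being checked by computer. A second route exploits the self-similarity of $F((1-q)^{r})$ modulo $p$: the $p$-dissection of $g$ re-expresses in terms of $F((1-q^{p})^{r})$, i.e.\ of $g$ again, which invites a mod-$p$ Hecke-operator induction in the spirit of Andrews--Sellers reducing the claim to a finite base case.

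The step I expect to be the main obstacle is exactly this last one: proving, uniformly in $p$, that the $\tfrac12(p-1)$ sub-series carry no further $\mathbb F_p$-linear relation. Everything before it — the dissection, the theta formula, the trimmed spanning set, and the counting identity $\tfrac12(p-1)=\#\{\,0\le i\le p-1:24\,i\,\overline r+1\ \text{is a nonzero square mod}\ p\,\}$ — is routine once Theorem \thm{mainthm} and its proof are in hand. What is missing is a mod-$p$ linear-independence principle for partial theta functions robust enough to cover all primes $p\ge5$ and all admissible $r$ simultaneously; supplying such a principle, or bypassing it via the Hecke-type induction, is what would upgrade the Conjecture to a theorem.
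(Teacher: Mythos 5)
This statement is labelled a \emph{conjecture} in the paper, and the paper supplies no proof of it; the only thing proved in its vicinity is Corollary \corol{congrels}, which gives the lower bound of $\tfrac12(p+1)$ relations. Your proposal is therefore not being measured against an existing argument, and, to your credit, you do not claim to have one either. Your reduction is correct and cleanly stated: writing $g(q)=\sum_j q^j g_j(q^p)$ over $\mathbb{F}_p$, the relation space is the kernel of $\vec\alpha\mapsto\sum_j\alpha_j g_j$, Corollary \corol{congrels} forces $\dim\,\mathrm{span}\{g_0,\dots,g_{p-1}\}\le\tfrac12(p-1)$, and the conjecture is exactly the reverse inequality. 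That bookkeeping is the correct frame, and identifying the sole missing ingredient as a mod-$p$ linear-independence statement for the surviving $\tfrac12(p-1)$ series is an accurate diagnosis of where the difficulty lives.

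The genuine gap is that the independence step is asserted as a programme, not proved, so the conjecture remains open under your proposal. Two specific cautions about the programme itself. First, your description of $g$ modulo $p$ as ``a partial-theta series plus a polynomial of degree $<p$'' is looser than what the paper actually establishes: the mod-$p$ structure coming from the proof of Theorem \thm{mainthm} is $\sum_{i\in S(p)\setminus\{i_0\}}(1-q)^{ri}A_p(pn-1,i,(1-q^p)^r)$ plus the $i_0$-term, where the $A_p(\cdot)$ are \emph{unknown} polynomials in $q^p$; the pentagonal/theta arithmetic controls only which residues $j\bmod p$ can occur, not the actual coefficients. So the ``sub-series'' whose independence you need are not explicit theta series but carry these undetermined factors, and characteristic-zero lifting of honest partial theta functions will not directly apply. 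Second, linear independence of the $g_i$ over $\mathbb{F}_p$ is a statement about infinitely many coefficients of a sequence with no known closed form modulo $p$, so ``check the remaining primes by computer'' cannot finish even a single prime without an additional periodicity or recursion result. These are exactly the obstacles that presumably led the author to leave the statement as a conjecture.
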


Following Andrews and Sellers \cite{An-Se}, we define
\beq
F(q,N) = \sum_{n=0}^N (q;q)_n,
\mylabel{eq:FqNdef}
\eeq
and the $p$-dissection
\beq
F(q,N) = \sum_{i=0}^{p-1} q^i A_p(N,i,q^p).
\mylabel{eq:pdissF}
\eeq
We consider the coefficients of the polynomials
\beq
A_p(pn-1,i,1-q) = \sum_{k\ge0} \alpha(p,n,i,k) q^k.
\mylabel{eq:alphacoeffs}
\eeq
The Andrews-Sellers Theorem \thm{ASmainthm} depends crucially on
\begin{lemma}[Andrews and Sellers \cite{An-Se}]
\mylabel{lem:ASlemma}
If $i\not\in S(p)$, then
$$
\alpha(p,n,i,k) = 0,
$$
for $0 \le k \le n-1$.  
\end{lemma}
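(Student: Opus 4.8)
We outline a plan for the proof.

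First we reformulate the statement. Writing $A_p(pn-1,i,Q)=\sum_{\ell\ge 0}c_{i,\ell}\,Q^\ell$ with $c_{i,\ell}=[q^{p\ell+i}]F(q,pn-1)$, the substitution $Q=1-q$ gives $\alpha(p,n,i,k)=(-1)^k\sum_{\ell}\binom{\ell}{k}c_{i,\ell}$, so $\alpha(p,n,i,k)=0$ for $0\le k\le n-1$ is equivalent to $(1-Q)^n\mid A_p(pn-1,i,Q)$. Since $1-q^p$ has only simple zeros, this is in turn equivalent to $(1-q^p)^n$ dividing $q^iA_p(pn-1,i,q^p)$, that is, to the part of $F(q,pn-1)$ supported on exponents $\equiv i\pmod p$ vanishing to order $\ge n$ at every $p$-th root of unity. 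Writing this dissection piece as $F_i(q)=\tfrac1p\sum_{s=0}^{p-1}\zeta^{-is}F(\zeta^sq,pn-1)$ with $\zeta$ a fixed primitive $p$-th root of unity, and noting that its order of vanishing is the same at every $p$-th root of unity, it suffices to prove this order-$n$ vanishing at $q=1$.

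Next, the set $S(p)$ enters through the behaviour of $F(q,pn-1)=\sum_{m=0}^{pn-1}(q;q)_m$ near roots of unity. Each $(q;q)_m=\prod_{j=1}^m(1-q^j)$ vanishes at a $p$-th root of unity to order exactly $\lfloor m/p\rfloor$ (one simple zero from each $1-q^{p\ell}$ with $p\ell\le m$; order $m$ at $q=1$), so the discarded tail $\sum_{m\ge pn}(q;q)_m$ vanishes there to order $\ge\lfloor pn/p\rfloor=n$; hence, near every $p$-th root of unity, $F(q,pn-1)$ agrees to order $\ge n$ with the Kontsevich--Zagier series $\mathcal F(q):=\sum_{m\ge 0}(q;q)_m$, which is a well-defined power series in $q-\zeta$ because only finitely many $(q;q)_m$ contribute below any given order. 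By Zagier's strange identity, $\mathcal F$ coincides, at each root of unity, with the partial theta function whose $q$-exponents are the generalized pentagonal numbers $\tfrac12 k(3k-1)$, $k\in\mathbb Z$ --- exactly the exponents occurring in Euler's pentagonal number theorem $(q;q)_\infty=\sum_{k}(-1)^kq^{k(3k-1)/2}$. The $p$-dissection of that partial theta is supported precisely on the residue classes of the numbers $\tfrac12 k(3k-1)$, i.e.\ on $S(p)$, so its $i$-th component vanishes identically for $i\notin S(p)$. Transporting this back, the $i$-th dissection piece of $F(q,pn-1)$ agrees to order $\ge n$ at $q=1$ with an identically-zero function, hence vanishes to order $\ge n$ at $q=1$, which is the assertion.

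The step I expect to be the main obstacle is making the middle of this argument quantitative: Zagier's strange identity is naturally an equality of asymptotic expansions, and one needs a version sharp enough to yield exactly order $n$, with the role of the truncation $N=pn-1$ --- the count $\lfloor pn/p\rfloor=n$ of multiples of $p$ below $pn$, together with the higher-order contributions of the tails $(q^{m+1};q)_\infty$ separating $(q;q)_m$ from $\mathcal F$ --- made explicit. The cleanest route is to establish a finite identity expressing $F(q,pn-1)$ as a combination of pentagonal terms $q^{k(3k-1)/2}$ times explicit polynomials, plus a remainder divisible by $(1-q^p)^n$; the $q$-binomial theorem $(q;q)_m=\sum_{j}(-1)^jq^{j(j+1)/2}\qbin{m}{j}$ together with standard summations of Gaussian binomials over $m$ (recognising the pentagonal series in the $m\to\infty$ limit) are the natural tools for producing it. Once such an identity is in hand, the $p$-dissection and the vanishing of the $i$-th piece for $i\notin S(p)$ are immediate, and the divisibility of the remainder by $(1-q^p)^n$ gives $\alpha(p,n,i,k)=0$ for $0\le k\le n-1$.
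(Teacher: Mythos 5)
The paper does not prove this lemma itself (it is quoted from Andrews--Sellers), so I am comparing your plan against their proof and against the parallel argument the paper runs in Section \ref{sec:proofnewxithm} for the residue class $24i_0\equiv-1\pmod p$. Your reductions are all correct and cleanly done: the equivalence of $\alpha(p,n,i,k)=0$ for $0\le k\le n-1$ with $(1-Q)^n\mid A_p(pn-1,i,Q)$; the passage to order-$n$ vanishing of the dissection piece at $q=1$ alone (via $F_i(\omega q)=\omega^i F_i(q)$ for $\omega^p=1$); and the observation that the discarded tail $\sum_{m\ge pn}(q;q)_m$ contributes nothing below order $n$ at any $p$-th root of unity, so one may work with the full Kontsevich--Zagier series. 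Up to this point the argument is rigorous.

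The gap you flag yourself is, however, genuine and sits exactly at the decisive step: for $i\notin S(p)$ you need the $i$-th dissection piece of $\sum_m(q;q)_m$ to have vanishing Taylor coefficients at $q=1$ through order $n-1$, and ``by Zagier's strange identity it coincides with a partial theta supported on $S(p)$'' cannot be dissected and ``transported back'' as stated, because the partial theta diverges at roots of unity and the identity is only one of asymptotic expansions. What closes the gap is the explicit, rigorous form of that expansion, namely Zagier's formula quoted in the paper as \eqn{ZAG2}--\eqn{ZAG3}: the Taylor coefficients $b_n(\zeta)$ of $F(\zeta e^t)$ equal $\frac{(-1)^n}{24^n}\sum_j\binom{n}{j}\sum_i\gamma(j,i)\zeta^i$, where $\gamma(j,i)$ in \eqn{gamji} is a sum over $m$ with $(m^2-1)/24\equiv i\pmod p$ --- an empty sum when $i\notin S(p)$. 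Equating coefficients of $\zeta^i$ and inverting the triangular system through the array $C(n,i,j,p)$ then forces $A_p^{(j)}(p(j+1)-1,i,1)=0$ for all $j$, i.e.\ $\alpha(p,j+1,i,j)=0$, and Lemma \lem{alphacoeffs} propagates this to all $0\le k\le n-1$. This is essentially how Andrews and Sellers argue, and it is precisely the computation Section \ref{sec:proofnewxithm} performs for $i_0$. Your proposed alternative --- a finite $q$-binomial identity exhibiting $F(q,pn-1)$ as pentagonal terms plus a multiple of $(1-q^p)^n$ --- is a plausible and genuinely different (more elementary) route, but as written it is a research plan rather than a proof: the identity is neither stated nor derived, so the argument is incomplete at its key step.
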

We consider the analog of this result when $24i\equiv -1 \pmod{p}$.
For $p\ge 5$ prime we define $\bar\xi_p(n)$ by the formal power
series
\beq
\sum_{n=0}^\infty \bar\xi_p(n) q^n = (1-q)^{\lfloor \frac{p}{24} \rfloor}
F( (1-q)^p ),
\mylabel{eq:barxip}
\eeq
where $F(q)$ is defined in \eqn{Fdef}. Observe that when $5 \le p \le 23$,
$$
\bar\xi_p(n) = \xi_p(n),
$$
for $n\ge0$. We find the following new relation for
Fishburn numbers.
\begin{theorem}
\mylabel{thm:newxithm}
Suppose $p\ge 5$ is prime and $24i_0\equiv -1 \pmod{p}$ where
$1 \le i_0 \le p-1$. Then
$$
\alpha(p,n,i_0,k) =  p \, \leg{12}{p}\,\bar\xi_p(k),
$$
for $0 \le k \le n-1$.
Here $\leg{\cdot}{\cdot}$ is the Kronecker symbol.
\end{theorem}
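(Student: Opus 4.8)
\emph{Reformulation and a preliminary.} Since $\alpha(p,n,i_0,k)=[q^k]\,A_p(pn-1,i_0,1-q)$ by \eqn{alphacoeffs} and $\sum_{k\ge0}\bar\xi_p(k)q^k=(1-q)^{\lfloor p/24\rfloor}F((1-q)^p)$ by \eqn{barxip}, the assertion is equivalent --- after the substitution $Q=1-q$ --- to the congruence
\[
A_p(pn-1,i_0,Q)\;\equiv\;p\,\leg{12}{p}\,Q^{\lfloor p/24\rfloor}\,F(Q^p)\pmod{(1-Q)^n}
\]
in $\mathbb Z[[1-Q]]$; the right side lives there because $(Q^p;Q^p)_m\in(1-Q)^m\mathbb Z[1-Q]$, so that $F(Q^p)\equiv\sum_{m=0}^{n-1}(Q^p;Q^p)_m\pmod{(1-Q)^n}$. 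A preliminary observation, using $p^2\equiv1\pmod{24}$ for $p\ge5$: if $0\le i_0\le p-1$ and $24i_0\equiv-1\pmod p$, then $i_0\equiv\tfrac{p^2-1}{24}\pmod p$ and in fact $\tfrac{p^2-1}{24}=i_0+p\lfloor p/24\rfloor$ as integers; this is the source of the shift $\lfloor p/24\rfloor$ in \eqn{barxip}. The plan is to run the Andrews--Sellers dissection of $F(q,pn-1)=\sum_{j=0}^{pn-1}(q;q)_j$ underlying Lemma \lem{ASlemma}, but to evaluate it \emph{exactly} in the class $i_0$ rather than merely to the order needed for vanishing.

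\emph{Step 1: the class $i_0$ is ramified.} The finite $q$-series identity behind Lemma \lem{ASlemma} (essentially a finite/polynomial version of the Kontsevich--Zagier ``strange identity'') shows that, modulo $(1-Q)^n$, the component $A_p(pn-1,i,Q)$ is carried by the generalized pentagonal indices $m$ with $\tfrac12 m(3m-1)\equiv i\pmod p$, which is empty when $i\notin S(p)$. Multiplying by $24$, this congruence reads $(6m-1)^2\equiv24i+1\pmod p$. For $i=i_0$ the hypothesis $24i_0\equiv-1\pmod p$ makes the right side $\equiv0$, so the quadratic has the \emph{double} root $6m\equiv1\pmod p$: the class $i_0$ is fed by the single residue $m\equiv m_0:=6^{-1}\pmod p$, whereas a generic $i\in S(p)$ is fed by two residues of $m$. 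It is this collapse of two branches to one that produces the extra factor $p$.

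\emph{Step 2: evaluating the surviving family.} Take the representative $m_0$ with $6m_0-1=\pm p$, so $\tfrac12 m_0(3m_0-1)=\tfrac{p^2-1}{24}$, and write the contributing indices as $m=m_0+p\ell$ ($\ell\in\mathbb Z$). A short computation gives $(6m-1)^2=p^2(6\ell\pm1)^2$ and hence
\[
\tfrac12 m(3m-1)-i_0\;=\;p\Big(\lfloor p/24\rfloor+p\cdot\tfrac12\ell(3\ell\pm1)\Big),
\]
so, on dividing by $p$, the $q^p$-exponents that occur are $\lfloor p/24\rfloor$ plus $p$ times the generalized pentagonal numbers in $\ell$; thus $A_p(pn-1,i_0,Q)$ equals $Q^{\lfloor p/24\rfloor}$ times the $i_0$-restriction of the accompanying finite coefficients, summed over $\ell$ with the signs $(-1)^m$. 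The point is that this surviving sum is --- up to the shift $Q^{\lfloor p/24\rfloor}$ and a scalar --- the same pentagonal/Eichler-type expression that the strange identity attaches to $F$, now in the variable $Q^p$; so it reorganizes into a rescaled, shifted copy of $F(Q^p)=\sum_m(Q^p;Q^p)_m$. Two inputs pin down the scalar: (i) the factor is $p=\Phi_p(1)$, transparent in the roots-of-unity form $A_p(pn-1,i_0,Q)\big|_{Q=1}=\tfrac1p\sum_{a=0}^{p-1}\zeta^{-ai_0}F(\zeta^a)$ with $\zeta=e^{2\pi i/p}$, where $(\zeta^a;\zeta^a)_{p-1}=\Phi_p(1)=p$ enters and $\Phi_p(Q)=p+O(1-Q)$; and (ii) the sign $\leg{12}{p}=\leg{3}{p}$ appears as the value $\chi_{12}(p)$ of the character $\chi_{12}$ of the strange identity, or concretely by collecting the pentagonal signs $(-1)^m$ along the family $m\equiv m_0\pmod p$. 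Combining these gives the displayed congruence.

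\emph{Main obstacle.} The hard part is the bookkeeping in Step 2: tracking the finite coefficients of the identity, restricted to the class $i_0$ and re-expanded at $Q=1$, accurately enough to conclude that below $(1-Q)$-order $n$ they reproduce \emph{exactly} $p\,\leg{12}{p}\,F(Q^p)$ --- with no residual $Q$-dependent factor and with the truncation error genuinely in $(1-Q)^n\mathbb Z[[1-Q]]$ --- together with the precise sign $\leg{12}{p}$. (Andrews and Sellers needed only the weaker fact that these coefficients vanish to sufficiently high order; here one needs the full $(1-Q)$-adic expansion, i.e.\ the quantum-modular-form refinement of the strange identity.) Consistency checks along the way: $\Phi_p(1-q)=\frac{1-(1-q)^p}{q}$; the equality $\bar\xi_p=\xi_p$ for $5\le p\le23$ (where $\lfloor p/24\rfloor=0$); and the $k=0$ instance $A_p(pn-1,i_0,Q)\big|_{Q=1}=p\,\leg{12}{p}$, which for $p=5$ is the elementary evaluation $\tfrac15\big(1+\sum_{a=1}^{4}\zeta^{-a}\sum_{j=0}^{4}(\zeta^a;\zeta^a)_j\big)=-5$.
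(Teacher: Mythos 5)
Your reformulation and your Step 1 correctly identify the mechanism: the class $i_0$ with $24i_0\equiv-1\pmod p$ is the one where the quadratic $(6m-1)^2\equiv 24i+1\pmod p$ degenerates to a double root $m\equiv 0\pmod p$ (equivalently, in Zagier's sum \eqn{gamji} only the terms $m=p,5p$ survive), and this is indeed the source of the extra factor $p$ and of the sign $\chi(p)=\leg{12}{p}$. But there is a genuine gap, and you flag it yourself: Step~2 is the entire quantitative content of the theorem and is not carried out. Asserting that ``the surviving sum reorganizes into a rescaled, shifted copy of $F(Q^p)$'' is precisely the statement to be proved, and the ``bookkeeping'' you defer is not routine, because $F(q,N)=\sum_{n\le N}(q;q)_n$ is not $(q;q)_\infty$ and the strange identity is not a formal power series identity --- it holds only as an identity of asymptotic expansions at roots of unity, so one cannot ``read off'' pentagonal contributions from the dissection of $F(q,pn-1)$. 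Moreover your sketch treats the relevant partial theta function as $\sum(-1)^m q^{m(3m-1)/2}$ (pentagonal signs only), whereas the strange identity involves $\sum_{n\ge1} n\,\chi(n)\,q^{(n^2-1)/24}$; the weight factor $n$ is essential, since it is what produces the Glaisher $T$-numbers \eqn{Tn} and hence $F$ itself (rather than $(q;q)_\infty$) on the right-hand side. Your heuristics (i) and (ii) for pinning down the scalar do not substitute for this.

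The paper supplies exactly the machinery your Step~2 is missing: Zagier's explicit expansion \eqn{ZAG2}--\eqn{ZAG3} of $F(\zeta e^t)$ at roots of unity gives the numbers $\gamma(j,i)$ of \eqn{gamji}; the Andrews--Sellers array $C(n,i,j,p)$ converts the $t$-derivatives $b_n(\zeta)$ into the derivatives $A_p^{(j)}(p(j+1)-1,i,1)$, i.e.\ into the coefficients $\alpha(p,k+1,i_0,k)$; Theorem \thm{A1id} (the genuinely new ingredient, proved via the generalized Stirling numbers $s_1(n,j,m)$ and the recursion \eqn{frec}) inverts the resulting triangular system in closed form; and the exponential generating function \eqn{Gkxu} reassembles the answer into $(1-q)^{\lfloor p/24\rfloor}F((1-q)^p)$ via \eqn{Fexp}. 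To complete your argument you would need either this chain or an honest order-$n$ (quantum-modular) refinement of the strange identity for the truncations $F(q,pn-1)$; neither is present in the proposal.
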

Our main Theorem \thm{mainthm}  will follow from Lemma \lem{ASlemma}
and Theorem \thm{newxithm}
in a straightforward manner.

\section{Preliminary results}
\mylabel{sec:prelim}
\begin{lemma}
\mylabel{lem:alphacoeffs}
Let $p$ be prime and suppose $0\le j\le p-1$ and $0\le k \le M-1 \le N-1$.
Then
$$
\alpha(p,N,j,k)=\alpha(p,M,j,k),
$$
where $\alpha(p,n,i,k)$ is defined in \eqn{alphacoeffs}.
\begin{proof}
Let $\zeta=\exp(2\pi i/p)$. Then from \eqn{pdissF} we have
$$
A_p(N,j,q) = \frac{1}{p} \sum_{k=0}^{p-1} \zeta^{-jk} q^{-\frac{j}{p}}
F\left(\zeta^k q^{\frac{1}{p}},N\right).
$$
Next we suppose that $n\ge pM$. Then
\begin{align*}
\left( \zeta^k (1-q)^{\frac{1}{p}}; \zeta^k (1-q)^{\frac{1}{p}} \right)_n 
&= \prod_{j=1}^n \left(1 - \left(\zeta^k (1-q)^{\frac{1}{p}}\right)^j\right)
\\
&=\prod_{j=1}^{\lfloor n/p \rfloor} ( 1 - (1-q)^j )
\prod_{\substack{j=1 \\ j\not\equiv0\pmod{p}}}^{n}               
\left(1 - \left(\zeta^k (1-q)^{\frac{1}{p}}\right)^j\right)
\\
&=\prod_{j=1}^M ( j q + O(q^2) )
\prod_{\substack{j=1 \\ j\not\equiv0\pmod{p}}}^{n}               
\left(1 - \left(\zeta^k (1-q)^{\frac{1}{p}}\right)^j\right)
\\
& =\quad O\left(q^M\right).
\end{align*}
Thus
\begin{align*}
A_p(pN-1,j,1-q) &= \frac{1}{p} \sum_{k=0}^{p-1} \zeta^{-jk} q^{-\frac{j}{p}}
\sum_{n=0}^{pN-1} 
\left( \zeta^k (1-q)^{\frac{1}{p}}; \zeta^k (1-q)^{\frac{1}{p}} \right)_n 
\\
& = A_p(pM-1,j,1-q) + O(q^M).
\end{align*}
The result follows.
\end{proof}
\end{lemma}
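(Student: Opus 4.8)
The plan is to recover $A_p(pN-1,j,1-q)$ by a roots-of-unity dissection and then show that raising the truncation level from $pM-1$ to $pN-1$ changes the result only by a power series divisible by $q^M$, so that the coefficients $\alpha(p,\cdot,j,k)$ with $k\le M-1$ are unaffected. Concretely, with $\zeta=\exp(2\pi i/p)$, replacing $q$ by $\zeta^\ell q$ in \eqn{pdissF}, multiplying by $\zeta^{-j\ell}$ and summing over $\ell$ isolates the $j$-th block, giving
$$
A_p(pN-1,j,q) = \frac1p\, q^{-j/p}\sum_{\ell=0}^{p-1}\zeta^{-j\ell}\,F\!\left(\zeta^\ell q^{1/p},pN-1\right),
$$
where $q^{1/p}$ is a formal symbol. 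Substituting $q\mapsto 1-q$ and reading $(1-q)^{\pm 1/p}$ as the binomial series in $\mathbb{C}[[q]]$ — legitimate since $1-q$ has constant term $1$ — exhibits $A_p(pN-1,j,1-q)$ as a fixed $\mathbb{C}[[q]]$-combination of the truncated sums $F\big(\zeta^\ell(1-q)^{1/p},pN-1\big)=\sum_{n=0}^{pN-1}\big(x_\ell;x_\ell\big)_n$, where $x_\ell=\zeta^\ell(1-q)^{1/p}$.

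The key step is an order-of-vanishing bound on each $q$-Pochhammer term. In $\big(x_\ell;x_\ell\big)_n=\prod_{m=1}^n\bigl(1-x_\ell^{\,m}\bigr)$, a factor with $p\nmid m$ has constant term $1-\zeta^{\ell m}\neq 0$ and so is a unit in $\mathbb{C}[[q]]$, while a factor with $m=pt$ satisfies $x_\ell^{\,m}=(1-q)^t$, so $1-x_\ell^{\,m}=1-(1-q)^t=t\,q+O(q^2)$ vanishes to order exactly one. Since $\{1,\dots,n\}$ contains $\lfloor n/p\rfloor$ multiples of $p$, this gives $\big(x_\ell;x_\ell\big)_n=O\bigl(q^{\lfloor n/p\rfloor}\bigr)$, hence $\big(x_\ell;x_\ell\big)_n=O(q^M)$ for every $n\ge pM$. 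Consequently
$$
F\big(\zeta^\ell(1-q)^{1/p},pN-1\big)-F\big(\zeta^\ell(1-q)^{1/p},pM-1\big)=\sum_{n=pM}^{pN-1}\big(x_\ell;x_\ell\big)_n=O(q^M),
$$
and applying the fixed prefactors and summing over $\ell$ gives $A_p(pN-1,j,1-q)-A_p(pM-1,j,1-q)=O(q^M)$; comparing coefficients of $q^k$ for $0\le k\le M-1$ yields the claim.

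I expect the only delicate point to be the bookkeeping with the formal $p$-th root: one must check that, after $q\mapsto 1-q$, each $F(\zeta^\ell q^{1/p},N)$ genuinely lives in $\mathbb{C}[[q]]$ and that a ``unit'' there really is invertible. Both follow at once from $1-\zeta^{\ell m}\neq 0$ for $p\nmid m$; everything else is routine formal-power-series manipulation.
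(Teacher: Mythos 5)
Your argument is correct and follows essentially the same route as the paper: the roots-of-unity dissection expressing $A_p(pN-1,j,1-q)$ in terms of the truncated sums $F\left(\zeta^\ell(1-q)^{1/p},\,pN-1\right)$, combined with the observation that each Pochhammer term with $n\ge pM$ contributes $O(q^M)$ because the factors indexed by multiples of $p$ vanish to order one. Your extra remarks on the formal $p$-th root and invertibility of unit factors only make explicit what the paper leaves implicit.
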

Andrews and Sellers define a Stirling like array of numbers
$C(n,i,j,p)$ for $n\ge0$, $0\le i \le p-1$, and $0\le j \le n$,
which are defined by the recursion
\beq
C(n+1,i,j,p) = (i + jp) C(n,i,j,p) + p C(n,i,j-1,p),
\mylabel{eq:Crec}
\eeq
and the initial value
\beq
C(0,i,0,p) = 1.
\mylabel{eq:Cinit}
\eeq
It is understood that if either of the conditions
$n\ge 0$ or $0 \le j \le n$ are not satisfied, then
$C(n,i,j,p)=0$. We note that
$$
C(n,i,0,p) = i^n.
$$
We need a generalization of the signless Stirling numbers of the first
kind. We define the numbers $s_1(n,j,m)$ for $0\le j \le n$ by
\beq
\sum_{j=0}^n s_1(n,j,m) x^j = (x - m)(x - m + 1) \cdots (x - m + n -1).
\mylabel{eq:s1def}
\eeq
We note case $m=0$ correspond to the signless Stirling numbers of the
first kind $s_1(n,j)$. We define
\beq
f(x,n,k,m) = (-1)^n \sum_{j=k}^n \binom{j}{k} s_1(n,j,m) x^j,
\mylabel{eq:fdef}
\eeq
for $0 \le k \le n$, otherwise define $f(x,n,k,m)=0$.
\begin{lemma}
\mylabel{lem:frec}
For $0 \le k \le n+1$ we have
\beq
f(x,n+1,k,m) = -( (x + n - m) f(x,n,k,m) + x f(x,n,k-1,m) ).
\mylabel{eq:frec}
\eeq
\end{lemma}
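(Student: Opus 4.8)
The plan is to reduce the whole identity to the three‑term recursion satisfied by the generalized signless Stirling numbers $s_1(n,j,m)$ and then reassemble $f(x,n+1,k,m)$ from it with a single application of Pascal's rule. First I would record the recursion for the $s_1$'s. Writing $(x-m)(x-m+1)\cdots(x-m+n) = (x-m+n)\,(x-m)(x-m+1)\cdots(x-m+n-1)$ and comparing coefficients of $x^j$ in \eqn{s1def} gives
$$s_1(n+1,j,m) = s_1(n,j-1,m) + (n-m)\,s_1(n,j,m),$$
valid for all $j$ with the usual convention $s_1(n,j,m)=0$ when $j<0$ or $j>n$.

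Next I would substitute this into the defining sum \eqn{fdef} for $f(x,n+1,k,m)$ and split it into two pieces. The piece coming from $(n-m)s_1(n,j,m)$ immediately contributes $-(n-m)\,f(x,n,k,m)$, once one notes that $(-1)^{n+1}=-(-1)^n$ and that $s_1(n,j,m)$ vanishes for $j>n$, so the summation range collapses to $k\le j\le n$. For the piece coming from $s_1(n,j-1,m)$ I would reindex $j\mapsto j+1$, pull out a factor of $x$, and then apply $\binom{j+1}{k}=\binom{j}{k}+\binom{j}{k-1}$; the two resulting sums are precisely $f(x,n,k,m)$ and $f(x,n,k-1,m)$ (the term at the lower endpoint in the $\binom{j}{k}$‑sum drops out because $\binom{k-1}{k}=0$). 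Collecting the three contributions yields
$$f(x,n+1,k,m) = -x\,f(x,n,k,m) - x\,f(x,n,k-1,m) - (n-m)\,f(x,n,k,m),$$
which is exactly \eqn{frec}.

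Finally I would check the two boundary values of $k$ to confirm the conventions are consistent. For $k=0$ the term $f(x,n,-1,m)$ is $0$ by definition and \eqn{frec} reduces to $f(x,n+1,0,m) = -(x+n-m)\,f(x,n,0,m)$, which is immediate from \eqn{s1def} since $f(x,n,0,m)=(-1)^n(x-m)(x-m+1)\cdots(x-m+n-1)$. For $k=n+1$ one has $f(x,n,n+1,m)=0$ and $f(x,n,n,m)=(-1)^n x^n$ (using $s_1(n,n,m)=1$), so \eqn{frec} gives $f(x,n+1,n+1,m)=(-1)^{n+1}x^{n+1}$, again matching the definition. There is no genuine obstacle here; the only point requiring care is the bookkeeping of the summation limits together with the sign $(-1)^{n+1}$ versus $(-1)^n$ when the Pascal split is carried out.
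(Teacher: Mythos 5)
Your proof is correct, but it takes a genuinely different route from the paper. You work directly with the coefficient definition \eqn{fdef}: you first extract the three-term recursion $s_1(n+1,j,m)=s_1(n,j-1,m)+(n-m)\,s_1(n,j,m)$ from the product in \eqn{s1def}, substitute it into the sum defining $f(x,n+1,k,m)$, reindex, and split $\binom{j+1}{k}=\binom{j}{k}+\binom{j}{k-1}$ by Pascal's rule; the boundary checks at $k=0$ and $k=n+1$ confirm the vanishing conventions, and the bookkeeping you describe (the $(-1)^{n+1}$ versus $(-1)^n$ sign, $s_1(n,n+1,m)=0$, $\binom{k-1}{k}=0$) all goes through. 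The paper instead exploits the representation $f(x,n,k,m)=\frac{x^k}{k!}\bigl(\frac{\partial}{\partial x}\bigr)^k f(x,n,0,m)$, proves by induction on $k$ the derivative recursion $\tilde{f}(x,n+1,k,m)=-\bigl((x-m+n)\tilde{f}(x,n,k,m)+k\,\tilde{f}(x,n,k-1,m)\bigr)$ for $\tilde{f}=\bigl(\frac{\partial}{\partial x}\bigr)^k f(x,n,0,m)$, and then rescales by $x^k/k!$ to get \eqn{frec}. Your argument is the more elementary one: it needs no calculus and no induction, only a reindexing and Pascal's identity, and it makes the edge conventions fully explicit. The paper's derivative formulation buys a slicker derivation in which the binomial factor $\binom{j}{k}$ never has to be manipulated by hand (it is generated automatically by repeated differentiation), at the cost of an induction on $k$ and the translation between $f$ and $\tilde{f}$. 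Either proof serves the later application in Theorem \thm{A1id} equally well.
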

\begin{proof}
First we observe that
$$                
f(x,n,k,m) = \frac{x^k}{k!} \left(\frac{\partial}{\partial x}\right)^k
f(x,n,0,m),
$$               
where
$$
f(x,n,0,m)= (-1)^n (x - m)(x - m + 1) \cdots (x - m + n -1) 
\quad\text{(by \eqn{s1def})},
$$
for $k\ge0$. Now we let
$$
\tilde{f}(x,n,k,m) = 
\left(\frac{\partial}{\partial x}\right)^k
(-1)^n (x - m)(x - m + 1) \cdots (x - m + n -1),
$$
for $0 \le k \le n$, otherwise define $\tilde{f}(x,n,k,m)=0$.
We show that 
\beq
\tilde{f}(x,n+1,k,m) = -\left( (x -m+n) \tilde{f}(x,n,k,m) 
                       + k \tilde{f}(x,n,k-1,m) \right)
\mylabel{eq:tildefrec}
\eeq
where $0 \le k \le n+1$. Since    
$$
\tilde{f}(x,n+1,0,m) = -(x-m+n) \tilde{f}(x,n,0,m),
$$
we have
$$
\frac{\partial}{\partial x} \tilde{f}(x,n+1,0,m)
= -(x-m+n) \frac{\partial}{\partial x} \tilde{f}(x,n,0,m) 
  - \tilde{f}(x,n,0,m),
$$
so that
$$  
\tilde{f}(x,n+1,1,m) = -\left( (x -m+n) \tilde{f}(x,n,1,m) 
                       + \tilde{f}(x,n,0,m) \right)
$$
and \eqn{tildefrec} holds for $k=1$. We assume \eqn{tildefrec}
holds for $k \le K$. 
\begin{align*}
\tilde{f}(x,n+1,K+1,m) &= \pdx \tilde{f}(x,n+1,K,m) \\
&= -\pdx\left[ 
(x -m+n) \tilde{f}(x,n,K,m) 
                       + K \tilde{f}(x,n,k-1,m) \right]\\
&= -\left[
(x -m+n) \tilde{f}(x,n,K+1,m)  + \tilde{f}(x,n,K,m)
                       + K \tilde{f}(x,n,K,m) \right]\\
&= -\left[
(x -m+n) \tilde{f}(x,n,K+1,m)  + (K+1) \tilde{f}(x,n,K,m)
                       \right],  
\end{align*}
and \eqn{tildefrec} holds for $k=K+1$. Hence \eqn{tildefrec}
holds for all $k$ by induction. Since
$$
f(x,n,k,m) = \frac{x^k}{k!} \tilde{f}(x,n,k,m),
$$
the result \eqn{frec} follows easily.
\end{proof}

\begin{theorem}
\mylabel{thm:A1id}
Let $i_0 = (p^2 - 1)z - mp$. Suppose that
\beq
\sum_{\ell=0}^n C(n,i_0,\ell,p) A_1(\ell,m) = (-1)^n z^n 
\sum_{k=0}^n \binom{n}{k}\,X(k),
\mylabel{eq:A1eqns}
\eeq
for $n\ge0$. Then
\beq
A_1(n,m) = (-1)^n \sum_{k=0}^n \sum_{j=k}^n 
                  \binom{j}{k} s_1(n,j,m) p^{j-2k} X(k) z^j,
\mylabel{eq:A1id}
\eeq
for $n\ge0$.
\end{theorem}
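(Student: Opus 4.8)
The plan is to exploit that \eqn{A1eqns} is a lower-triangular system with invertible diagonal, hence uniquely solvable. First I would record that $C(n,i,n,p)=p^{n}$ for all $n$: this is immediate by induction from \eqn{Crec} and \eqn{Cinit}, since $C(n,i,n+1,p)=0$ forces $C(n+1,i,n+1,p)=pC(n,i,n,p)$. Consequently the equations \eqn{A1eqns}, read in the order $n=0,1,2,\dots$, determine the numbers $A_1(n,m)$ uniquely in terms of the data $X(0),X(1),\dots$. So it is enough to produce one sequence satisfying \eqn{A1eqns} and to check that it coincides with the right-hand side of \eqn{A1id}.

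To that end I would define $A_1'(n,m):=\sum_{k=0}^{n}p^{-2k}\,X(k)\,f(pz,n,k,m)$. By the definition \eqn{fdef} of $f$ (with $x=pz$, so that $x^{j}=p^{j}z^{j}$) this is literally the right-hand side of \eqn{A1id}, so the theorem reduces to showing that $A_1'$ satisfies \eqn{A1eqns}. Substituting $A_1'$ into the left side of \eqn{A1eqns} and interchanging the order of summation — legitimate because $f(pz,\ell,k,m)=0$ for $\ell<k$ and $C(n,i_0,\ell,p)=0$ for $\ell>n$ — the whole matter collapses to the single identity
\[
G(n,k):=\sum_{\ell=0}^{n}C(n,i_0,\ell,p)\,f(pz,\ell,k,m)=(-1)^{n}z^{n}p^{2k}\binom{n}{k},\qquad 0\le k\le n,
\]
since feeding it back gives $\sum_{k}p^{-2k}X(k)\,G(n,k)=(-1)^{n}z^{n}\sum_{k}\binom{n}{k}X(k)$, which is exactly the right side of \eqn{A1eqns}. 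The base case $n=0$ is trivial, because $s_{1}(0,0,m)=1$ gives $f(pz,0,0,m)=1$ and $f(pz,0,k,m)=0$ for $k\ge 1$.

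For the inductive step I would expand $C(n+1,i_0,\ell,p)$ by \eqn{Crec}, shift the index in the $pC(n,i_0,\ell-1,p)$ term to align it with $f(pz,\ell+1,k,m)$, and then eliminate $f(pz,\ell+1,k,m)$ using the recursion \eqn{frec} of Lemma \lem{frec}. This rewrites $G(n+1,k)$ as $\sum_{\ell=0}^{n}C(n,i_0,\ell,p)\bigl[\alpha\,f(pz,\ell,k,m)-p^{2}z\,f(pz,\ell,k-1,m)\bigr]$ with $\alpha=i_0+\ell p-p(pz+\ell-m)$. Here is the one place the hypothesis $i_0=(p^{2}-1)z-mp$ enters: the $\ell p$ and $mp$ terms cancel and $\alpha$ collapses to the constant $-z$, whence $G(n+1,k)=-z\,G(n,k)-p^{2}z\,G(n,k-1)$; plugging in the inductive formula and applying Pascal's rule $\binom{n}{k}+\binom{n}{k-1}=\binom{n+1}{k}$ finishes the induction. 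Everything apart from this forced cancellation is routine triangular-sum bookkeeping, so I expect that single algebraic step to be the only delicate point. Finally, by the uniqueness noted at the outset, $A_1(n,m)=A_1'(n,m)$, which is precisely \eqn{A1id}.
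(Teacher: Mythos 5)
Your proposal is correct and matches the paper's own argument essentially step for step: both exploit the triangular structure with diagonal entries $C(n,i_0,n,p)=p^n$, reduce to the identity $\sum_{\ell}C(n,i_0,\ell,p)f(pz,\ell,k,m)=(-1)^n z^n p^{2k}\binom{n}{k}$ by isolating the coefficient of $X(k)$, and prove it by induction using \eqn{Crec} and \eqn{frec}, with the hypothesis $i_0=(p^2-1)z-mp$ producing exactly the collapse to $-z$ and Pascal's rule closing the induction. The only (harmless) difference is that you also verify $C(n,i,n,p)=p^n$ by induction, which the paper simply asserts.
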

\begin{proof}
Since for $n\ge 0$ \eqn{A1eqns} forms a triangular system of equations
in the unknowns $A_1(\ell,m)$ (for fixed $m$) and each diagonal coefficient
$$
C(n,i_0,n,p) = p^n \ne 0,
$$
it suffices to show that $A_1(n,m)$ given by \eqn{A1id} satisfies
\eqn{A1eqns}. By considering the coefficient of $X(L)$ it suffices to
show that
\beq
\sum_{\ell=0}^n C(n,i_0,\ell,p) f(pz,\ell,L,m) = G(n,L),
\mylabel{eq:XLid}
\eeq
where
$$
G(n,L) = (-1)^n \binom{n}{L} z^n p^{2L},
$$
for $0 \le L \le n$ and $m\ge 0$. We proceed by induction on $n$.
The result is clearly true for $n=0$. We assume \eqn{XLid} holds for $n=N$.
Now
\begin{align*}
& \sum_{\ell=0}^{N+1} C(N+1,i_0,\ell,p) f(pz,\ell,L,m) \\
&= \sum_{\ell=0}^{N+1} 
\left( ( (p^2-1)z -mp + \ell p) C(N,i_0,\ell,p) + p C(N,i_0,\ell-1,p)\right)
f(pz,\ell, L, m) \quad\text{(by \eqn{Crec})}\\
&= \sum_{\ell=0}^{N} 
( (p^2-1)z -mp + \ell p) C(N,i_0,\ell,p) f(pz,\ell, L, m)\\
&\quad + \sum_{\ell=0}^{N} p C(N,i_0,\ell,p) f(pz,\ell+1,L,m)
\\
&= \sum_{\ell=0}^{N} 
( (p^2-1)z -mp + \ell p) C(N,i_0,\ell,p) f(pz,\ell, L, m)\\
&\quad -\sum_{\ell=0}^{N} p C(N,i_0,\ell,p) \bigg(
  (pz+\ell-m) f(pz,\ell,L,m) + pz f(pz,\ell,L-1,m) \bigg)\\
&=-z \sum_{\ell=0}^N  C(N,i_0,\ell,p) f(pz,\ell, L, m)
  -p^2z \sum_{\ell=0}^N  C(N,i_0,\ell,p) f(pz,\ell, L-1, m)\\
& \hskip 3in \text{(by \eqn{frec})}\\
&=-zG(N,L) -p^2zG(N,L-1)\\
&= (-1)^{N+1}\bigg( \binom{N}{L} z^{N+1}p^{2L} + \binom{N}{L-1}z^{N+1}p^{2L}
\bigg)\\
& =(-1)^{N+1} \binom{N+1}{L} z^{N+1} p^{2L} = G(N+1,L),
\end{align*}
and \eqn{XLid} holds for $n=N+1$, thus completing our induction proof.
\end{proof}

We will also need some results of Zagier \cite{Za} on the
Fishburn numbers. We need the following formal power series 
identity \cite[Eqn.(4),p.946]{Za}
\beq
e^{t/24} \sum_{n=0}^\infty (1 - e^t)\cdots(1 - e^{nt}) =
\sum_{n=0}^\infty \frac{T_n}{n!} \left(\frac{-t}{24}\right)^n,
\mylabel{eq:ZAG1}
\eeq
where $T_n$ are the Glaisher $T$-numbers \cite{A002439} and which are given
explicitly by
\beq
T_n = 6 \frac{(-144)^n}{n+1}\left[
B_{2n+2}\left(\frac{1}{12}\right) -
B_{2n+2}\left(\frac{5}{12}\right)\right],
\mylabel{eq:Tn}
\eeq
where $B_n(x)$ denotes the $n$-th Bernoulli polynomial. 
We remark
that letting $t=\log(1-q)$ in \eqn{ZAG1} and using \eqn{s1njkgen} below
we find that
\beq
\xi(n) = \sum_{m=0}^n \sum_{k=0}^m (-1)^{n+k} \binom{-1/24}{n-m} 
\frac{s_1(m,k)}{ m! 24^k } T_k,
\eeq
which is useful for calculation.
Zagier \cite{Za} also determined the behaviour of $F(q)$ when
$q$ is near a root of unity. In particular,
if $\zeta=\zeta_p$ is a $p$-th root of unity and $N=12p$ then
\beq
e^{t/24} F(\zeta e^t) 
= \sum_{n=0}^\infty \frac{c_n(\zeta)}{n!} \left(\frac{-t}{24}\right)^n,
\mylabel{eq:ZAG2}
\eeq
where
\beq
c_n(\zeta) = \frac{(-1)^n N^{2n+1}}{2n+2}
\sum_{m=1}^{N/2} \chi(m) \zeta^{\frac{1}{24}(m^2-1)}
B_{2n+2}\left(\frac{m}{N}\right),
\mylabel{eq:ZAG3}
\eeq
and where $\chi$ is the character mod $12$ given by
\beq
\chi(n) = \leg{12}{n} = 
\begin{cases}
1 & \mbox{if $n\equiv\pm1\pmod{12}$,}\\
-1 & \mbox{if $n\equiv\pm5\pmod{12}$,}\\
0 & \mbox{otherwise.}
\end{cases}
\mylabel{eq:chi12}
\eeq
This character occurs in the statement of Theorem \thm{newxithm}.

\section{Proof of Theorem \thm{newxithm}}
\mylabel{sec:proofnewxithm}
In this section we assume $p>3$ is prime, $N=12p$ and $\zeta$ is any $p$-th 
root of unity. Following
\cite{Za} we define the sequence $(b_n(\zeta))$ formally by
\beq
F(\zeta e^t) = \sum_{n=0}^\infty \frac{b_n(\zeta)}{n!} t^n.
\mylabel{eq:ZAG4}
\eeq
From \eqn{ZAG1}, \eqn{ZAG2}, \eqn{ZAG3} we have for $n\ge0$
\beq
b_n(\zeta) = \frac{(-1)^n}{24^n} \sum_{j=0}^n\binom{n}{j}
\sum_{i=0}^{p-1} \gamma(j,i)\zeta^i,
\mylabel{eq:nbz}
\eeq
where
\beq
\gamma(j,i) = \frac{(-1)^j N^{2j+1}}{2j+2}
\sum_{\substack{m=1 \\ (m^2-1)/24 \equiv i\pmod{p}}}^{N/2}
\chi(m) B_{2j+2}\left(\frac{m}{N}\right).
\mylabel{eq:gamji}
\eeq
As in \cite{An-Se} we have for $n\ge0$
$$
b_n(\zeta) = \rvl{ \dtp^n F(\zeta e^t)}{t=0}
           = \rvl{ \qdqp^n F(q)}{q=\zeta}
           = \rvl{ \qdqp^n F(q,m)}{q=\zeta}
$$
for $m\ge (n+1)p-1$. Proceeding as in the proof of \cite[lemma 2.5]{An-Se}
we have from \eqn{ZAG3}, \eqn{nbz} and Lemma \lem{alphacoeffs} that
\begin{align*}
b_n(\zeta) &= \rvl{ \qdqp^n F(q,(n+1)p-1)}{q=\zeta} \\ 
&=\sum_{j=0}^n\sum_{i=0}^{p-1} C(n,i,j,p) \zeta^i A_p^{(j)}(p(n+1)-1,i,1)\\
&=\sum_{j=0}^n\sum_{i=0}^{p-1} C(n,i,j,p) \zeta^i A_p^{(j)}(p(j+1)-1,i,1)\\
&=\frac{(-1)^n}{24^n}
\sum_{j=0}^n\binom{n}{j}
\sum_{i=0}^{p-1} \gamma(j,i)\zeta^i.
\end{align*}
Since this identity holds for all $p$-th roots of unity $\zeta$ (including
$\zeta=1$) and all the coefficients involved are all rational numbers
we may equate coefficients of $\zeta^i$ on both sides to obtain
\beq
\sum_{j=0}^n C(n,i,j,p) A_p^{(j)}(p(j+1)-1,i,1)
=
\frac{(-1)^n}{24^n}
\sum_{j=0}^n\binom{n}{j} \gamma(j,i),
\mylabel{eq:Apjid}
\eeq
for $0 \le i \le p-1$. Now we let $i_0$  be the least nonnegative integer
satisfying $24i_0\equiv-1\pmod{p}$. 
We find that
\beq
i_0 = \frac{p^2-1}{24} - \left\lfloor\frac{p}{24}\right\rfloor p.
\mylabel{eq:i0}
\eeq 
We now calculate $\gamma(j,i_0)$.
We see that     
$$
\frac{m^2-1}{24} \equiv i_0\pmod{p} \text{\ if and only if\ }
m\equiv 0\pmod{p}.
$$
In the sum \eqn{gamji} (with ($N=12p$) we only consider the terms
with $m=p$, $5p$ to find that
\beq
\gamma(j,i_0) =  \chi(p) \frac{(-1)^j 12^{2j+1}p^{2j+1}}{2j+2}
\left(
B_{2j+2}\left(\frac{1}{12}\right)
-
B_{2j+2}\left(\frac{5}{12}\right)\right).
\mylabel{eq:gamji0}
\eeq
We apply Theorem \thm{A1id} with $z=\frac{1}{24}$ and 
$m=\left\lfloor\frac{p}{24}\right\rfloor$ to equation \eqn{Apjid}
with $i=i_0$ 
to obtain 
\begin{align*}
A_p^{(n)}(p(n+1)-1,i_0,1)
&=(-1)^n \sum_{k=0}^n \sum_{j=k}^n
\binom{j}{k}
s_1(n,j,\lfloor{p/24}\rfloor) \frac{p^{j+1}}{24^j} \\
& \qquad 
\chi(p) \frac{(-1)^k 12^{2k+1}}{2k+2}\left(
B_{2k+2}\left(\frac{1}{12}\right) -
B_{2k+2}\left(\frac{5}{12}\right)\right)\\
&=(-1)^n \chi(p) \sum_{k=0}^n \sum_{j=k}^n
\binom{j}{k}
s_1(n,j,\lfloor{p/24}\rfloor) \frac{p^{j+1}}{24^j}T_k,
\end{align*}
by \eqn{Tn}.
From \eqn{alphacoeffs} we see that
$$
\alpha(p,n+1,i_0,n) = \frac{(-1)^n}{n!} A_p^{(n)}(p(n+1)-1,i_0,1).
$$
Hence
\begin{align}
\sum_{n=0}^\infty \alpha(p,n+1,i_0,n) x^n 
&= p \chi(p) \sum_{n=0}^\infty \frac{x^n}{n!} 
\sum_{k=0}^n \sum_{j=k}^n
\binom{j}{k}
s_1(n,j,\lfloor{p/24}\rfloor) \frac{p^{j}}{24^j}T_k, 
\nonumber\\
&=
p \chi(p) \sum_{j=0}^\infty\sum_{k=0}^j
\left(\sum_{n=j}^\infty 
s_1(n,j,\lfloor{p/24}\rfloor) \frac{x^n}{n!} \right)
 \frac{p^j}{24^j} \binom{j}{k} T_k  
\mylabel{eq:apni0id}
\end{align}
It is well-known that the signless Stirling numbers of the
first kind have generating function
\beq
G(x, u) 
= \exp \left( -u \log {1-x} \right) 
= \left({1-x} \right)^{-u} = 
\sum_{n=0}^\infty \sum_{j=0}^n s_1(n,j) u^j \, \frac{x^n}{n!}. 
\mylabel{eq:Gxu}
\eeq
Our generalized signless Stirling numbers have generating
function
\beq
G_k(x, u) 
= (1-x)^k \exp \left( -u \log {1-x} \right) 
= \left({1-x} \right)^{k-u} = 
\sum_{n=0}^\infty \sum_{j=0}^n s_1(n,j,k) u^j \, \frac{x^n}{n!}. 
\mylabel{eq:Gkxu}
\eeq
Thus
\beq
\sum_{n=j}^\infty s_1(n,j,k) \frac{x^n}{n!} 
= (1 - x)^k \frac{\left( - \log(1-x) \right)^j}{j!}.
\mylabel{eq:s1njkgen}
\eeq
Hence from \eqn{s1njkgen} and \eqn{apni0id} we have
\begin{align}
\sum_{n=0}^\infty \alpha(p,n+1,i_0,n) q^n 
&= p \chi(p) 
 (1 - q)^{\lfloor{p/24}\rfloor} 
\sum_{j=0}^\infty
\left(
\sum_{k=0}^j
 \binom{j}{k} T_k  
\right)
 \frac{(-p\log(1-q))^j}{24^j j!}
\mylabel{eq:apni0id2}
\end{align}
From Zagier's result \eqn{ZAG1} we have
\beq
F(\exp(t)) = \sum_{n=0}^\infty \left(\sum_{k=0}^n \binom{n}{k} T_k\right) 
\frac{(-t)^n}{24^n n!}.
\mylabel{eq:Fexp}
\eeq
Thus using this result in \eqn{apni0id2} we have
\begin{align}
\sum_{n=0}^\infty \alpha(p,n+1,i_0,n) q^n 
&= p \chi(p) 
 (1 - q)^{\lfloor{p/24}\rfloor} 
 F(\exp(p\log(1-q))  \nonumber\\
&= p \chi(p) 
 (1 - q)^{\lfloor{p/24}\rfloor} F((1-q)^p).
\mylabel{eq:apni0id3}
\end{align}
Theorem \thm{newxithm} follows from \eqn{barxip} and \eqn{apni0id3} 
since
$$
\alpha(p,n,i_0,k) = \alpha(p,k+1,i_0,k),
$$
for $0 \le k \le n-1$ by Lemma \lem{alphacoeffs}.

\section{Proof of Theorem \thm{mainthm} and Corollary \corol{congrels}}
\mylabel{sec:proofmainthm}
We assume $p\ge 5$ is prime and $24i_0\equiv -1 \pmod{p}$ where
$1 \le i_0 \le p-1$. 
For two formal power series 
$$
A = \sum_{n=0}^\infty a_n q^n, \qquad
B = \sum_{n=0}^\infty b_n q^n \in \mathbb{Z}[[q]],
$$
we write
$$
A \equiv B \pmod{p}\quad\text{if}\quad a_n\equiv b_n \pmod{p},
$$
for all $n\ge0$. If $p$ is prime and $m$ is a nonnegative integer
$$
m = j p + r,\quad\text{where $0\le r < p$ and $j$, $r\in\mathbb{N}$},
$$
then
\beq
(1 - q)^m = (1- q)^r (1-q)^{pj} \equiv (1 - q)^r (1 - q^p)^j \pmod{p}.
\mylabel{eq:binomcong}
\eeq
Now 
suppose $r$ is a nonzero integer relatively
prime to $p$ and $0 \le s \le p-1$.  We consider two cases.
\subsection*{Case I} $r>0$. We proceed as in \cite[Section 3]{An-Se}.
We note that
\beq
\sum_{n=0}^\infty \xi(n) q^n = F(1-q,N) + O(q^{N+1}).
\mylabel{eq:xiF}
\eeq
Now from \eqn{pdissF} we have
\begin{align*}
F(q, pn - 1) &= \sum_{i=0}^{p-1} q^i A_p(pn-1,i,q^p) \\
&= \sum_{i\in S(p) \setminus \{i_0\}} q^i A_p(pn-1,i,q^p) + q^{i_0} A_p(pn-1,i_0,q^p) \\
&\qquad \qquad +
\sum_{i\not\in S(p)} q^i A_p(pn-1,i,q^p).
\end{align*}
Hence
\begin{align*}
F((1-q)^r, pn - 1) 
&= \sum_{i\in S(p) \setminus \{i_0\}} (1-q)^{ri} A_p(pn-1,i,(1-q)^{rp}) \\
&\qquad\qquad + (1-q)^{ri_0} A_p(pn-1,i_0,(1-q)^{rp}) \\
&\qquad \qquad +
\sum_{i\not\in S(p)} (1-q)^{ri} A_p(pn-1,i,(1-q)^{rp}).
\end{align*}
For $i\not\in S(p)$ we have
$$
A_p(pn-1,i,q) = \sum_{k\ge0} \alpha(p,n,i,k) (1 - q)^k,
$$
and
\begin{align*}
A_p(pn-1,i,(1-q)^{rp}) &= \sum_{k\ge0} \alpha(p,n,i,k) (1 - (1- q)^{rp})^k\\
&\equiv \sum_{k\ge0} \alpha(p,n,i,k) (1 - (1- q^p)^{r})^k \pmod{p}\\
&\equiv O(q^{pn}) \pmod{p},
\end{align*}
by Lemma \lem{ASlemma}. In a similar fashion we have
$$
A_p(pn-1,i_0,(1-q)^{rp}) 
\equiv O(q^{pn}) \pmod{p},
$$
using Theorem \thm{newxithm}.
Thus 
$$
(1-q)^s F((1-q)^r, pn - 1) 
\equiv \sum_{i\in S(p) \setminus \{i_0\}} (1-q)^{ri+s} A_p(pn-1,i,(1-q^p)^{r}) 
+ O(q^{pn}) \pmod{p}.
$$
By \eqn{binomcong} we see that the only terms $q^{j'}$ that occur in 
$(1-q)^{ri+s}$
(where $i\in S(p) \setminus \{i_0\}$) satisfy
$$
j' \equiv j \pmod{p} \quad \text{where $0\le j \le m$ and $m\in S^{*}(p,r,s)$.}
$$
This is because  $i\in S(p) \setminus \{i_0\}$ if and only if $ri+s$ is 
congruent  to an element of $S^{*}(p,r,s)$.
Since $A_p(pn-1,i,(1-q^p)^r)$ is a polynomial in $q^p$ the result follows by letting
$n\to\infty$; i.e.\ every term $q^{j'}$ in 
$$
(1-q)^s F( (1-q)^r ) 
= \sum_{n=0}^\infty \sum_{j=0}^s \binom{s}{j} (-1)^j \xi_r(n-j) q^n
$$ 
where $j'$ is congruent
to an element of $T^{*}(p,r,s)$ must have a coefficient that is congruent to $0$
mod $p$.

\subsection*{Case II} $r<0$. This time we choose integers $\beta$ and $m$ such that
$$
r = mp + \beta,
$$
where $0 < \beta \le p-1$ and $m < 0$. We find that
$$
(1 - q)^r =  (1 - q)^\beta \Phi(q^p) + p \Psi(q) 
\equiv (1 - q)^\beta \Phi(q^p) \pmod{p},       
$$
where $\Phi(q)$, $\Psi(q)\in \mathbb{Z}[[q]]$ and $\Phi(q)$ has constant term $1$ so that
it is a unit in the ring of formal power series. In fact,
$$
\Phi(q) = (1 - q)^{m} = 1 + \sum_{k=1}^\infty \binom{k-m-1}{k} q^k = 1 - mq + \frac{m(m-1)}{2} q^2 + \dots.
$$
We basically proceed as in Case I. We have
\begin{align*}
&F((1-q)^r, pn - 1) = 
F((1 - q)^\beta \Phi(q^p) + p \Psi(q), pn-1)\\
&= \sum_{i=0}^{p-1} ((1 - q)^\beta \Phi(q^p) + p \Psi(q))^i 
A_p(pn-1,i,((1 - q)^\beta \Phi(q^p) + p \Psi(q))^p) \\
&\equiv \sum_{i=0}^{p-1} (1 - q)^{\beta i} \left[\Phi(q^p)\right]^i  
A_p(pn-1,i,((1 - q^p)^\beta \left[\Phi(q^p)\right]^p ) \pmod{p}\\
&\equiv \sum_{i\in S(p) \setminus \{i_0\}} (1-q)^{\beta i} \left[\Phi(q^p)\right]^p 
A_p(pn-1,i,((1 - q^p)^\beta \left[\Phi(q^p)\right]^p ) 
+ O(q^{pn}) \pmod{p},
\end{align*}
and
\begin{align*}
&(1-q)^s F((1-q)^r, pn - 1) \\ 
&\equiv \sum_{i\in S(p) \setminus \{i_0\}} (1-q)^{\beta i + s} \left[\Phi(q^p)\right]^p 
A_p(pn-1,i,((1 - q^p)^\beta \left[\Phi(q^p)\right]^p ) 
+ O(q^{pn}) \pmod{p},
\end{align*}
arguing as before. This time
instead of the term
$$
A_p(pn-1,i,(1-q^p)^r),
$$ 
which is a polynomial in $q^p$ with integer coefficients we have the term
$$
\left[\Phi(q^p)\right]^p 
A_p(pn-1,i,((1 - q^p)^\beta \left[\Phi(q^p)\right]^p ),
$$
which is a formal power series in $q^p$ with integer coefficients.
The result follows as before by letting $n\to\infty$.
This completes the proof of our main theorem.

We now prove Corollary \corol{congrels}.
As before suppose $p\ge 5$ is prime and let $r$ be a fixed nonzero integer
relatively prime to $p$. Suppose $\overline{r}$ is the multiplicative
inverse of $r$ mod $p$. We see that
$$
r\tfrac{1}{2}n(3n-1) \equiv -1-s \pmod{p} \quad\text{if and only if}\quad
(6n-1)^2 \equiv -24(1+s)\overline{r} + 1 \pmod{p}.
$$
Thus $p-1\not\in S^{*}(p,r,s)$ if $-24(1+s)\overline{r}+1$ is
either a quadratic nonresidue mod $p$ or congruent to zero mod $p$.
There are $\tfrac{1}{2}(p+1)$ such values of $s$ for $0 \le s \le p-1$.
Thus
\beq
\sum_{j=0}^s \binom{s}{j} (-1)^j \xi_r(pn+p-1 -j) \equiv 0 \pmod{p},
\mylabel{eq:nicecongrel}
\eeq
for all $n\ge0$ provided $\leg{-24(1+s)\overline{r}+1}{p}=-1$ or $0$.
It is clear that this set of $\tfrac{1}{2}(p+1)$ congruence relations
mod $p$ is linearly independent. This completes the proof of
Corollary \corol{congrels}.

We illustrate \eqn{nicecongrel} with some examples.
\begin{example}
$s=0$ We have
$$
\xi(pn + p - 1) \equiv 0 \pmod{p}
$$
for all $n\ge 0$ provided $p\ge5$ is prime and $\leg{-23}{p}=0$ or $-1$; i.e.
$p\equiv 0, 5, 7, 10, 11, 14, 15, 17$, $19$, $20, 21$, or $22 \pmod{23}$.
\end{example}

\begin{example}
$s=1$ We have
$$
\xi(pn + p - 1) \equiv \xi(pn + p -2) \pmod{p}
$$
for all $n\ge 0$ provided $p\ge5$ is prime and $\leg{-47}{p}=0$ or $-1$; i.e.
$p\equiv 0, 5, 10, 11, 13, 15, 19, 20$, $22$, 
$23, 26, 29, 30, 31, 33, 35, 38, 39, 40, 41, 43, 44, 45$, or $46 \pmod{47}$.
\end{example}

\begin{example}
$s=2$ We have
$$
\xi(pn + p - 1) - 2\,\xi(pn + p -2) + \xi(pn + p -3) \equiv 0\pmod{p}
$$
for all $n\ge 0$ provided $p\ge5$ is prime and $\leg{-71}{p}=0$ or $-1$; i.e.
$p=71$ or $\leg{71}{p}=-1$. We have
$$
\xi_{-1}(pn + p - 1) - 2\,\xi_{-1}(pn + p -2) + \xi_{-1}(pn + p -3) \equiv 0\pmod{p}
$$
for all $n\ge 0$ provided $p\ge5$ is prime and $\leg{73}{p}=0$ or $-1$; i.e.
$p=73$ or $\leg{73}{p}=-1$. 
\end{example}

\section{Conclusion}
\mylabel{sec:theend}

We pose the following problems.

\begin{enumerate}
\item[(i)]
The numbers $\xi(n)$ and $\xi_{-1}(n)$ have many combinatorial
interpretations \cite{Br-Li-Rh}, \cite{Kh}, \cite{Le}.
Use one of the interpretations to find a rank or crank-type function
\cite{An-Ga88} to explain combinatorially the simplest congruences

\begin{align*}
\xi(5n+3) \equiv \xi(5n+4) &\equiv 0 \pmod{5},\\
\xi_{-1}(5n+4) &\equiv 0 \pmod{5}.
\end{align*}
\item[(ii)]
Zagier \cite{Za} showed that $q^{1/24}F(q)$ is a so-called quantum
modular form \cite{Za10}. Find and prove congruences for the
coefficients of other quantum modular forms. In particular
look at the functions considered by Andrews, Jim\'enez-Urroz and
Ono \cite{An-JU-On}.
\end{enumerate}

\bigskip

\noindent
\textbf{Acknowledgements}

\noindent
I would like to thank George Andrews, Sharon Garthwaite, Ira Gessel, 
Ken Ono and Rob Rhoades 
for their comments and suggestions. In particular I  thank
Ira Gessel for reminding me of the 
exponential generating function for the signless
Stirling numbers of the first kind, and I thank Sharon Garthwaite
and Rob Rhoades for sharing their observations on congruences
for pairs and triples for the sequences $\xi(n)$ and $\xi_{-1}(n)$.
\bibliographystyle{amsplain}

\end{document}